\def\ds{\displaystyle}
\def\Rset{{\rm I}\!{\rm R}}
\def\Nset{{\rm I}\!{\rm N}}
\def\ul{\underline}
\newtheorem{proposition}{Proposition}
\newtheorem{theorem}{Theorem}
\newtheorem{lemma}{Lemma}
\newtheorem{remark}{Remark}
\def\qed{\hfill \ensuremath{\Box}}
\begin{document}

\title{Global dynamics of the buffered chemostat\\ for a general class of
  response functions}
\author{A. Rapaport${}^{1,*}$, I. Haidar${}^{2}$ and
  J. Harmand${}^{3,*}$\\
{\small ${}^{1}$ MISTEA, UMR 729 INRA-SupAgro, Montpellier, France}\\
{\small ${}^{2}$ LSS-Supelec, Gif-sur-Yvette, France}\\
{\small ${}^{3}$ LBE, INRA, Narbonne, France}\\
{\small ${}^{*}$ MODEMIC, INRIA Sophia-Antipolis M\'editerran\'ee, France}}
\date{\today}

\maketitle

\begin{abstract}
We study how a particular spatial structure with a buffer
impacts the number of equilibria and their stability in
the chemostat model. 
We show that the occurrence of a buffer can allow a species to persist or on the
opposite to go extinct, depending on the characteristics of the buffer.
For non-monotonic response functions, we characterize the buffered
configurations that make the chemostat dynamics globally
asymptotically stable, while this is not possible with single, serial
or parallel vessels of the same total volume and input flow.
These results are illustrated with the Haldane kinetic function.\\

{\bf Key-words.} chemostat, interconnection, multi-stability, global asymptotic
stability.\\

{\bf AMS subject classifications.} 92D25, 34D23, 93A30, 90B05.

\end{abstract}
\section{Introduction}
The chemostat was introduced in the
fifties as an experimental
device to study the microbial growth on a limiting
resource \cite{M50,NS50}.
It is also often used as a mean to reproduce situations where 
(limiting) nutrients are fed to micro-organisms, typically in 
a liquid medium, in natural ecosystems \cite{HJ54,DGC02} or
anthropized environments \cite{L77}. More generally, the chemostat
is largely used as a scientific investigation tool in microbial ecology 
\cite{JM74,V77}.

The mathematical model of the chemostat has been
extensively studied (see e.g. \cite{SW95}) and used as a reference
model in microbiology \cite{P75}, microbial ecology \cite{FS81}
or biotechnological industries such as the waste-water treatment
\cite{DV01}.
More generally, the chemostat
serves to describe resource-consumer relations, where the resource is supplied at a constant rate.
However, in many applications, the assumption of perfectly stirred
chemostats is, in general, too restrictive. In the eighties, the gradostat, as an
experimental device composed of a 
set of chemostats of identical volume interconnected in series, was
introduced to represent spatial gradient \cite{LW81}, in a 
marine environment \cite{HYSS98} or to model rhizosphere
\cite{FHN11}. It motivated
several mathematical studies
\cite{T86,JSTW87,EE90,ST89,Z92,SW91,HS94,SW00,GTVP09}. 
Similarly, an interest for series of bioreactors appeared in
biochemical industry, with tanks of different volumes to be
minimized \cite{LT82,HR89,DBBT96,HRT99,DHRL06}.
In ecology, island models have been proposed since the late
sixties \cite{MW67} to study the effects of heterogeneous
environments with more general patterns than serial ones.
Several studies of prey-predator in patchy environments have been conducted since then \cite{L74,AN01}.
Comparatively, relatively few studies have considered non-serial
interconnections for resource-consumer models or chemostats \cite{STW91}.
In natural reservoirs such as in undergrounds or ground-waters, 
a spatial structure with interconnections between 
several volumes is often considered, each of them being approximated
as perfectly mixed tank. 
Those interconnections can be parallel, series or built up in more complex networks. To our knowledge, the 
influence of the topology of a network of chemostats on the overall
dynamics has been sparsely investigated in the literature.
However, the simple consideration of two interconnected habitats can lead to
non-intuitive behaviors \cite{SF79,NT06,RHM08,LTVP98} and influence
significantly the overall performances
\cite{NS06,HRG11}.
Recently, literature in ecology has raised the relevance of
``source-sink'' models for describing plants/nutrients
interactions, and predicting ecosystems performances \cite{L10,GGLM10,LDGGGLLMM13}. 
Those models are mathematically close to general
gradostat models, but with a significant difference concerning the
resources compartments, for which the input rate mechanisms (due to atmospheric
depositions or rock alterations) are assumed to be independent of the
nutrient leaching (and not modeled as a transport term as in hydrology
or in chemostat-like models). 

It is also well-known since the seventies that microbial growth can
be inhibited by large concentrations of nutrient. Such inhibition can be
modeled by non-monotonic response functions \cite{A68,BC76} and lead 
to initial-condition dependent washout \cite{BW85,WL92,L98}.
Non-monotonic response functions occur in predator-prey models,
for instance, when the
predation decreases due to the ability of the prey to better defend
when their population get larger. 
This non-monotonic functional response could also lead to bi-stability and possible extinction of the
predator \cite{FW86,XR01}.

Several control strategies of the input flow were proposed in the 
literature to globally stabilize the chemostat \cite{DB84,HRM06,RH08,SAL12} but the ability of a
spatial structure to passively stabilize such dynamics has not been
yet studied (in \cite{STW91} a general structure of networks of chemostats
is considered but with monotonic growth rates, while in \cite{T94} 
non-monotonic functions are considered but for the serial gradostat only).

The present work considers a particular interconnection of two
chemostats of different volumes, one being a buffer tank.
To our knowledge, this spatial structure, that is neither serial nor
parallel, has not yet been considered in the literature.
This structure is analogous to
refuges in patchy environments \cite{AN01}, but here both consumer and resource
are present in each vessel.
We prove that it is possible with such a configuration
to obtain repulsive washout equilibrium,
while any serial, parallel or single tank structures with the
same total volume exhibits multi-stability.
This result brings new insights into the role of spatial patterns in the stability of bio-conversion
processes in natural environments, where buffers can occur
such as in soil ecosystems. It has also
potential implications for the design of robust industrial bio-processes.

The paper is organized as follows. Section \ref{hypo} presents the
 hypotheses and the buffered configuration, comparing with
serial and parallel interconnections.
Section \ref{section3} studies the multiplicity of equilibria and
their stability for such configurations, considering a general class of response functions (monotonic as
well as non-monotonic).
Section \ref{section4} discusses the biological and ecological
implications of the results of Section \ref{section3} in terms of
persistence of microbial species in a non-homogeneous environment, along
with some industrial perspectives.
Numerical simulations illustrate the results on an Haldane
function in Section \ref{section_example}.
All the proofs are postponed to the Appendix.

\section{General considerations}
\label{hypo}
We consider the chemostat model with a single 
strain growing on a single limiting nutrient.
The system is fed with nutrient of concentration $S_{in}$ 
with flow rate $Q$.
The total volume $V$ is assumed to be constant
(i.e. input and output flow rates are supposed to be identical).
When the concentrations of nutrient (or substrate) and biomass, denoted
respectively $S$ and $X$,
are homogeneous, as it is the case in perfectly mixed tanks, 
the system can be modeled by the well-known differential equations:
\begin{equation}
\label{chemostat0}
\begin{array}{lll}
\dot S & = & \ds -\frac{\mu(S)}{Y}X+\frac{Q}{V}(S_{in}-S) \ ,\\
\dot X & = & \ds \mu(S)X-\frac{Q}{V}X \ ,
\end{array}
\end{equation}
where $\mu(\cdot)$ is the uptake function and $Y$ the yield
coefficient of the transformation of nutrient into biomass. 
Without any loss of generality, we take $Y=1$ (at the price of
changing $X$ in $YX$). For convenience, we define the dilution rate
\[
D=\frac{Q}{V} \ .
\]
We consider quite general uptake functions, that fulfill the
following properties.\\

\noindent {\bf Assumptions A1.}
\begin{itemize}
\item[i] The function $\mu(\cdot)$ is analytic and such that
$\mu(0)=0$, $\mu(S)>0$ for any $S>0$.
\item[ii] The function $\mu(\cdot)$ is either increasing, or there exists $\hat
S>0$ such that $\mu(\cdot)$ is increasing on $(0,\hat S)$ and
decreasing on $(\hat S,+\infty)$.
\end{itemize}

\medskip

The usual uptake functions, such as the Monod function \cite{M50}
\begin{equation}
\mu(S)=\frac{\mu_{\max}S}{K_{s}+S} \ ,
\end{equation}
or the Haldane one \cite{A68}
\begin{equation}
\label{Haldane}
\mu(S)=\frac{\bar\mu S}{K+S+S^{2}/K_{I}} \ ,
\end{equation}
fulfill theses hypotheses.
Classically, we consider the set
\begin{equation}
\label{Lambda}
\Lambda(D)=\{ S >0 \; \vert \; \mu(S) > D\}
\end{equation}
that plays an important role in the determination of the equilibria 
of the system. 
Under Assumptions A1, the set $\Lambda(D)$ is either empty or 
an open interval that we denote 
\[
\Lambda(D)=(\lambda_{-}(D),\lambda_{+}(D)) \ ,
\]
where $\lambda_{+}(D)$ can be equal to $+\infty$.\\

We recall from the theory of the chemostat model
(see for instance \cite{SW95}) that under Assumptions A1 there are three kinds of phase
portrait of the dynamics (\ref{chemostat0}), depending on the parameter $S_{in}$.

\begin{proposition}
\label{prop-chemostat}
Assume that Hypotheses A1 are fulfilled.
\begin{itemize}
\item[-] {\em Case 1:} $\Lambda(D)=\emptyset$ or $\lambda_{-}(D)\geq S_{in}$.
  The washout equilibrium $E_{0}=(S_{in},0)$ is the unique non
  negative equilibrium of system (\ref{chemostat0}). 
  Furthermore it is globally attracting.
\item[-] {\em Case 2:} $S_{in} > \lambda_{+}(D)$. The system (\ref{chemostat0}) has three non-negative
  equilibria $E_{-}(D)=(\lambda_{-}(D),S_{in}-\lambda_{-}(D))$,
  $E_{+}(D)=(\lambda_{+}(D),S_{in}-\lambda_{+}(D))$ and  $E_{0}=(S_{in},0)$.
  Only $E_{-}(D)$ and $E_{0}$
  are attracting, and the dynamics is bi-stable.
\item[-] {\em Case 3:} $S_{in} \in \Lambda(D)$. The system (\ref{chemostat0}) has two non negative equilibria
$E_{-}(D)=(\lambda_{-}(D),S_{in}-\lambda_{-}(D))$ and $E_{0}=(S_{in},0)$. $E_{-}(D)$ is globally attracting on the positive quadrant.
\end{itemize}
\end{proposition}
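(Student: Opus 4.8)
The plan is to exploit the conservation structure of (\ref{chemostat0}) and reduce the planar dynamics to a scalar one. First I would verify that the nonnegative quadrant $\{S\geq 0,\ X\geq 0\}$ is invariant (the axis $X=0$ is invariant, and $\dot S=DS_{in}>0$ on $S=0$) and that solutions are bounded. Introducing $z=S+X-S_{in}$ one computes $\dot z=-Dz$, so $z(t)=z(0)e^{-Dt}\to 0$; every trajectory thus converges to the attracting invariant line $\mathcal{L}=\{S+X=S_{in}\}$. Substituting $X=S_{in}-S$ gives the limiting scalar equation
\[
\dot S=(S_{in}-S)\,(D-\mu(S)),\qquad S\in[0,S_{in}] \ .
\]
Since this scalar system admits only isolated equilibria as possible $\omega$-limit points, the theory of asymptotically autonomous systems (Markus, Thieme) lets me transfer the scalar conclusions to the full planar flow.

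The core is then the sign analysis of $\dot S=(S_{in}-S)(D-\mu(S))$ using the interval structure of $\Lambda(D)$. The factor $S_{in}-S$ is positive on $[0,S_{in})$, while $D-\mu(S)$ changes sign exactly at the endpoints $\lambda_{\pm}(D)$. In Case 1 ($\Lambda(D)=\emptyset$ or $\lambda_{-}(D)\geq S_{in}$) one has $\mu(S)\leq D$ throughout $[0,S_{in})$, so $\dot S\geq 0$ and $S\to S_{in}$: the only equilibrium is $E_{0}$ and it is globally attracting. In Case 3 ($S_{in}\in\Lambda(D)$, i.e. $\lambda_{-}(D)<S_{in}<\lambda_{+}(D)$) the sign of $\dot S$ is $+$ on $(0,\lambda_{-}(D))$ and $-$ on $(\lambda_{-}(D),S_{in})$, so $\lambda_{-}(D)$ attracts all of $(0,S_{in})$, giving global attraction of $E_{-}(D)$ while $E_{0}$ repels on $\mathcal{L}$. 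In Case 2 ($S_{in}>\lambda_{+}(D)$) the sign pattern is $+,-,+$ on the three subintervals delimited by $\lambda_{-}(D)$ and $\lambda_{+}(D)$, so both $\lambda_{-}(D)$ and $S_{in}$ are attracting while $\lambda_{+}(D)$ separates their basins — the announced bi-stability.

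To pin down the local nature of each equilibrium, and in particular to confirm that $E_{+}(D)$ and, in Case 2, $E_{0}$ are saddles rather than attractors, I would compute the Jacobian of (\ref{chemostat0}). At $E_{0}$ the eigenvalues are $-D$ and $\mu(S_{in})-D$, the latter negative precisely when $S_{in}\notin\Lambda(D)$; at $E_{\pm}(D)$, where $\mu(\lambda_{\pm}(D))=D$, the determinant equals $D\,\mu'(\lambda_{\pm}(D))\,(S_{in}-\lambda_{\pm}(D))$, positive at $\lambda_{-}(D)$ (where $\mu'>0$) and negative at $\lambda_{+}(D)$ (where $\mu'<0$), so that $E_{-}(D)$ is locally asymptotically stable and $E_{+}(D)$ a saddle point. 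The main obstacle is the rigorous lifting from the scalar limiting equation to the full two-dimensional system: one must check the hypotheses of the asymptotically autonomous reduction, chiefly that the $\omega$-limit set of the limiting equation consists only of equilibria, and treat the degenerate subcase $\lambda_{+}(D)=+\infty$ separately, where the middle equilibrium is absent. \qed
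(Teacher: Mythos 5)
The paper itself offers no proof of Proposition \ref{prop-chemostat}: it is recalled as a classical fact about the chemostat model, with a pointer to \cite{SW95}, and nothing for it appears in the Appendix. So there is no in-paper argument to compare against step by step; what can be said is that your proof is correct and that it runs on exactly the machinery the paper uses for its \emph{own} results: the conserved quantity $Z=S+X-S_{in}$ with $\dot Z=-DZ$, the reduction of (\ref{chemostat0}) to an asymptotically autonomous scalar equation on the attracting line $S+X=S_{in}$, convergence to equilibria of the limiting flow via the Markus--Thieme theorem (Theorem \ref{thThieme}, \cite{MST95}, applicable because the scalar limit equation has only isolated equilibria, so its chain-recurrent sets reduce to equilibria), and a Jacobian/stable-manifold argument to decide which equilibria can actually be reached from initial conditions with $X(0)>0$. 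This is precisely the structure of the paper's proofs of Lemma \ref{lemma} and of Theorem \ref{main_theo}, so your blind reconstruction is the ``intended'' one, and it buys a self-contained justification of a statement the paper delegates to the literature.

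One slip should be corrected: you assert that $E_{+}(D)$ ``and, in Case 2, $E_{0}$'' are saddles rather than attractors. In Case 2 one has $S_{in}>\lambda_{+}(D)$, hence $\mu(S_{in})<D$, so both eigenvalues $-D$ and $\mu(S_{in})-D$ at $E_{0}$ are negative and $E_{0}$ is \emph{attracting}; that is exactly the source of the announced bi-stability. It is in Case 3 that $E_{0}$ is a saddle, and there its stable manifold is the invariant axis $\{X=0\}$, which is the fact needed to exclude convergence to the washout from $X(0)>0$ and thus to conclude that $E_{-}(D)$ is globally attracting on the positive quadrant. Your own eigenvalue computation (``the latter negative precisely when $S_{in}\notin\Lambda(D)$'') already states the correct dichotomy, so the misattribution is a slip of labeling rather than a flaw in the logic; with that sentence repaired, the proof stands.
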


\medskip

Notice that in case 2, the qualitative behavior of the growth can change radically depending on the initial condition.\\

The question we investigate in this paper is related to the assumption
that the vessel is perfectly mixed, and to the role that a spatial structure 
could have on the stability of the dynamics.
Consider the case for which the washout equilibrium is attracting in the chemostat
model (cases 1 and 2 of Proposition
\ref{prop-chemostat}). Furthermore, consider
spatial configurations with the same input flow and
residence time than the perfectly mixed case, i.e. with the same total
volume $V$ and input flow $Q$. Then, one has the following property. 

\begin{lemma}
\label{LemmaSerialParallel}
Assume that Hypotheses A1 are fulfilled and let $Q$ and $V$ be such that $S_{in}\notin\Lambda(D)$.
Then the washout is an attracting equilibrium in at least one vessel of any
interconnection in series or in parallel of $n$ tanks of volume
$V_{i}$ such that $\sum_{i=1}^{n}V_{i}=V$, assuming that each of them
is perfectly mixed.
\end{lemma}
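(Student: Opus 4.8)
The plan is to reduce both the serial and the parallel case to Proposition~\ref{prop-chemostat} applied to a single, well-chosen vessel. The bridge is the elementary monotonicity of the set $\Lambda$: since $\Lambda(D)=\{S>0:\mu(S)>D\}$, any two dilution rates with $D'\geq D$ satisfy $\Lambda(D')\subseteq\Lambda(D)$. Hence, if a vessel has dilution rate $D_i\geq D$ and is fed (directly or asymptotically) with concentration $S_{in}$, then the hypothesis $S_{in}\notin\Lambda(D)$ forces $S_{in}\notin\Lambda(D_i)$, so that this vessel falls into Case~1 or Case~2 of Proposition~\ref{prop-chemostat}; in both cases its washout equilibrium is attracting. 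The whole argument therefore amounts to exhibiting, in each topology, one vessel whose dilution rate is at least $D$ and whose feed concentration is $S_{in}$.

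For the serial interconnection I would single out the first tank. Its volume satisfies $V_1\leq\sum_i V_i=V$, so its dilution rate is $D_1=Q/V_1\geq Q/V=D$. Moreover, in a unidirectional cascade the dynamics of the first tank is decoupled from the downstream ones and coincides exactly with (\ref{chemostat0}) for the dilution rate $D_1$ and the feed $S_{in}$. By the monotonicity above, $S_{in}\notin\Lambda(D_1)$, and Proposition~\ref{prop-chemostat} yields that the washout of tank~$1$ is attracting; since tank~$1$ drives the rest of the chain, this produces an attracting equilibrium of the full system carrying washout in the first vessel.

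For the parallel interconnection, the feed $S_{in}$ is common to every branch but the flow $Q$ is split as $Q=\sum_i Q_i$, so that vessel $i$ is an independent chemostat with dilution rate $D_i=Q_i/V_i$. Here I would invoke a mediant (pigeonhole) inequality: were $Q_i/V_i<Q/V$ for every $i$, summing the inequalities $Q_i<(Q/V)V_i$ would give $Q=\sum_i Q_i<(Q/V)\sum_i V_i=Q$, a contradiction; hence some index $i^\star$ satisfies $D_{i^\star}\geq D$. As before $S_{in}\notin\Lambda(D_{i^\star})$, and Proposition~\ref{prop-chemostat} makes the washout of vessel $i^\star$ attracting; combining it with any attracting equilibrium in the remaining (decoupled) branches gives an attracting equilibrium of the parallel system with washout in vessel $i^\star$.

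The routine inequalities $V_1\leq V$ and $\max_i Q_i/V_i\geq Q/V$ are the easy part. The point requiring the most care is the passage from ``washout attracting in one isolated vessel'' to ``washout attracting for the coupled system'': in the serial case one must check that the cascade structure preserves attractivity (the first tank converging, then each successive tank being a chemostat driven by a convergent input), and in the parallel case that a product of attracting equilibria of decoupled subsystems is attracting for the whole. One should also note the harmless boundary situation $\mu(S_{in})=D_i$ (i.e.\ $S_{in}=\lambda_+(D_i)$), not literally listed among the three cases of Proposition~\ref{prop-chemostat}, where washout nonetheless remains at least stable.
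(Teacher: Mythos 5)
Your proof is correct and essentially identical to the paper's: for the serial case the paper likewise singles out the first tank, whose dilution rate $Q/V_{1}\geq Q/V=D$ forces $S_{in}\notin\Lambda(Q/V_{1})$, and for the parallel case it uses the same pigeonhole argument (phrased as: washout repulsive in every tank would require $\alpha_{i}<r_{i}$ for all $i$, contradicting $\sum_{i}\alpha_{i}=\sum_{i}r_{i}=1$), both concluding via Proposition \ref{prop-chemostat}. The one shaky detail is your parenthetical claim that in the boundary situation $\mu(S_{in})=D_{i}$ with $S_{in}=\lambda_{+}(D_{i})$ the washout ``remains at least stable'': on the invariant manifold $S+X=S_{in}$ one has $\dot X=\bigl(\mu(S_{in}-X)-D_{i}\bigr)X\approx -\mu^{\prime}(S_{in})X^{2}>0$ for small $X>0$, so the washout there actually repels interior trajectories --- but the paper's own proof silently skips this same degenerate case (it is covered by none of the three cases of Proposition \ref{prop-chemostat}), so this does not separate your argument from theirs.
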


This Lemma shows that when a bacterial species cannot persist in a
chemostat, from any or a subset of initial conditions, this property
persists in at least one vessel of any serial or parallel
interconnection of chemostats with the same total volume.
In the present work, we study a different kind of
spatial configuration with an asymmetry created by two interconnected
volumes, one of them serving as a buffer (see Figure
\ref{fig-pocket}). We call these spatial configurations a ``buffered chemostat'',
to be compared with the ``single chemostat''.
\begin{figure}[ht]
\begin{center}
\includegraphics[width=4cm]{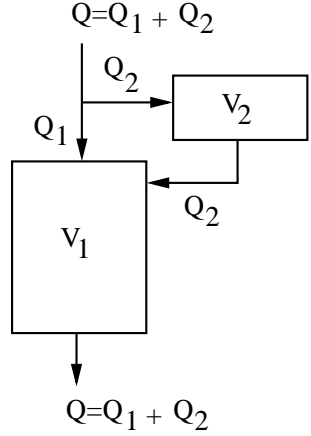}
\caption{
The buffered chemostat.
\label{fig-pocket}}
\end{center}
\end{figure}
$V_{1}$ and $V_{2}$ are respectively the volumes of the main tank and
the buffer, and $Q_{1}$ and $Q_{2}$ denote the
input flow rates of each tank, with $Q=Q_{1}+Q_{2}$. 
We assume that each vessel is perfectly mixed.
Straightforwardly, the dynamical equations of the buffered chemostat are
\begin{equation}
\label{chemostat2}
\begin{array}{lll}
\dot S_{1} & = & \ds
-\mu(S_{1})X_{1}+\frac{Q_{1}S_{in}+Q_{2}S_{2}-QS_{1}}{V_{1}} \ ,\\
\dot X_{1} & = & \ds \mu(S_{1})X_{1}+\frac{Q_{2}X_{2}-QX_{1}}{V_{1}} \
,\\
\dot S_{2} & = & \ds
-\mu(S_{2})X_{2}+\frac{Q_{2}S_{in}-Q_{2}S_{2}}{V_{2}} \ ,\\
\dot X_{2} & = & \ds \mu(S_{2})X_{2}-\frac{Q_{2}X_{2}}{V_{2}} \ .
\end{array}
\end{equation}
Notice that the limiting case $V_{1}=0$ consists in a by-pass of the
volume $V_{2}$ with a flow $Q_{1}$.\\

In the next Section, we study the equilibria of this model, their
multiplicity and their stability.

\section{Analysis of  the dynamics of the buffered chemostat}
\label{section3}

Given a volume $V$ and an input flow rate $Q$, we
describe the set of all possible buffered configurations with
$Q=Q_{1}+Q_{2}$ and $V=V_{1}+V_{2}$ by two parameters $r \in (0,1)$ and
$\alpha> 0$
defined as follows 
\[
r=\frac{V_{1}}{V}, \quad \alpha=\frac{Q_{2}}{(1-r)Q} \ .
\]
This choice of parameterization is more convenient than the original
one because it decouples more easily the role of the two
parameters, as it is shown by equations (\ref{chemostat2b}) below.

Dynamics (\ref{chemostat2}) can then be written in the following way
\begin{equation}
\label{chemostat2b}
\begin{array}{lll}
\dot S_{1} & = & \ds
-\mu(S_{1})X_{1}+D\frac{\alpha(1-r)(S_{2}-S_{1})+
(1-\alpha(1-r))(S_{in}-S_{1})}{r} \ ,\\[2mm]
\dot X_{1} & = & \ds \mu(S_{1})X_{1}+D\frac{\alpha(1-r)(X_{2}-X_{1})-
(1-\alpha(1-r))X_{1}}{r} \ ,\\[2mm]
\dot S_{2} & = & \ds -\mu(S_{2})X_{2}+D\alpha(S_{in}-S_{2}) \ ,\\[2mm]
\dot X_{2} & = & \ds \mu(S_{2})X_{2}-D\alpha X_{2} \ .
\end{array}
\end{equation}
At equilibrium, one should have $\dot S_{2}+\dot X_{2}=\alpha
D(S_{in}-S_{2}-X_{2})=0$ that is $S_{2}+X_{2}=S_{in}$. Then, one
should have $\dot S_{1}+\dot X_{1}=D(S_{in}-S_{1}-X_{1})/r=0$ that is
$S_{1}+X_{1}=S_{in}$. Thus, equilibria
$(S_{1}^{\star},X_{1}^{\star},S_{2}^{\star},X_{2}^{\star})$
of dynamics (\ref{chemostat2b}) 
can be written as solutions of the following equations:
\begin{eqnarray}
\label{eq1}
1+\frac{1-r}{r}\left(1-\alpha\frac{S_{in}-S_{2}^{\star}}{S_{in}-S_{1}^{\star}}\right)=\frac{\mu(S_{1}^{\star})}{D}
\mbox{ or } \left\{ S_{1}^{\star}=S_{in} \mbox{ when } S_{2}^{\star}=S_{in}\right\}\ ,\\
\label{eq2}
X_{1}^{\star}=S_{in}-S_{1}^{\star} \ ,\\
\label{eq3}
\alpha=\frac{\mu(S_{2}^{\star})}{D} \mbox{ or } S_{2}^{\star}=S_{in} \ ,\\
\label{eq4}
X_{2}^{\star}=S_{in}-S_{2}^{\star} \ .
\end{eqnarray}
Due to the cascade structure of model (\ref{chemostat2}),
the study of the dynamics of the second reactor can be done 
independently of the first one. Depending of the value of $\alpha$,
the three cases given in Proposition \ref{prop-chemostat} for the single
chemostat are possible in the second tank. This implies the following
two possibilities for the equilibria of the first sub-system.

\begin{enumerate}
\item When $(S_{2}(\cdot),X_{2}(\cdot))$ converges to the washout equilibrium
  (cases 1 and 2), the $(S_{1},X_{1})$ dynamics is asymptotically
  equivalent to a single chemostat model with dilution rate $D/r$, and
  Proposition \ref{prop-chemostat} applies.

\item When  $(S_{2}(\cdot),X_{2}(\cdot))$ converges towards a positive
  equilibrium $(S_{2}^{\star}(\alpha),S_{in}-S_{2}^{\star}(\alpha))$ (cases 2
  and 3), we consider the family of hyperbola $H_{\alpha,r}$ that are the graphs of the functions
\begin{equation}
\label{phi}
\phi_{\alpha,r}(s)=1+\frac{1-r}{r}\left(1-\alpha\frac{S_{in}-S_{2}^{\star}(\alpha)}{S_{in}-s}\right)
\end{equation}
parameterized by $\alpha$ and $r\in (0,1)$.
From equations (\ref{eq1}) and (\ref{eq2}), a positive
equilibrium $(S_{1}^{\star},X_{1}^{\star})$ of (\ref{chemostat2b}) satisfies
\begin{equation}
\label{geometric_condition}
\phi_{\alpha,r}(S_{1}^{\star})=\mu(S_{1}^{\star})/D
\end{equation}
or equivalently $S_{1}^{\star}$ is the abscissa of an intersection of the graph of
$\mu(\cdot)/D$ with the hyperbola $H_{\alpha,r}$. Then, from  equation
(\ref{eq2}), to each solution
$S_{1}^{\star}$ corresponds a unique
$X_{1}^{\star}=S_{in}-S_{1}^{\star}$. Notice that the washout is not an
equilibrium for the first tank.
\end{enumerate}

In the following, we consider only non-trivial cases for which the second tank
admits a positive equilibrium, assuming the hypotheses:\\

\noindent {\bf Assumptions A2.}
Under Assumptions A1, $D$ and $\alpha$ are positive numbers such that
$\Lambda(\alpha D)\neq \emptyset$ and $\lambda_{-}(\alpha D) < S_{in}$.\\

Similar to the single chemostat that considers the set
$\Lambda(D)$ given in (\ref{Lambda}), we define the set
\begin{equation}
\label{Gamma}
\Gamma_{\alpha,r}(D)=\left\{ S \in (0,S_{in}) \, \vert \,
\mu(S)>D\phi_{\alpha,r}(S) \right\} \ .
\end{equation}

We shall also consider the subset of configurations for which
system (\ref{chemostat2b}) admits an unique positive equilibrium, denoted by
\begin{equation}
\label{defRbar}
\overline{R}_{\alpha}(D) = \left\{ r \in (0,1) \, \vert \,
\exists!\; s \in (0,S_{in}) \mbox{ s.t. } D\phi_{\alpha,r}(s)=\mu(s)
\right\} \ .
\end{equation}

\medskip

We state now our main results.

\begin{theorem}
\label{main_theo}
Assume that Hypotheses A1 and A2 are fulfilled. 
The set $\Gamma_{\alpha,r}(D)$ is non-empty, and for almost any $r \in
(0,1)$ one has the following properties, except from a subset of
initial conditions of zero Lebesgue measure.
\begin{itemize}
\item[i.] When the initial condition of the $(S_{2},X_{2})$ sub-system
  belongs to the attraction basin of $(S_{in},0)$, the solution $(S_{1},X_{1})$ 
  of system (\ref{chemostat2b}) converges
  exponentially to the rest point
  $(\lambda_{-}(D/r),S_{in}-\lambda_{-}(D/r))$ when
  $\lambda_{-}(D/r)<S_{in}$, or to the washout equilibrium when
  $\mu(S_{in})<D/r$.
\item[ii.] When the initial condition of the $(S_{2},X_{2})$ sub-system
  does not belong to the attraction basin of $(S_{in},0)$,
the trajectory of the system (\ref{chemostat2b}) converges
exponentially to a positive equilibrium\\
$(S_{1}^{\star},S_{in}-S_{1}^{\star},\lambda_{-}(\alpha D),S_{in}-\lambda_{-}(\alpha D))$ where $S_{1}^{\star}$ is the left endpoint of a connected
component of $\Gamma_{\alpha,r}(D)$.
\end{itemize}
Moreover, the set $\overline{R}_{\alpha}(D)$ is non-empty.
\end{theorem}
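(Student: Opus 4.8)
The plan is to exploit the cascade (triangular) structure of (\ref{chemostat2b}) together with the two conservation relations, so as to reduce the asymptotics of the four-dimensional system to a pair of \emph{decoupled scalar} equations whose attractors are read off from $\Lambda$ and $\Gamma_{\alpha,r}(D)$. First I would set $z_{1}=S_{1}+X_{1}-S_{in}$ and $z_{2}=S_{2}+X_{2}-S_{in}$. Summing the appropriate pairs of equations in (\ref{chemostat2b}) gives $\dot z_{2}=-\alpha D\,z_{2}$ and $\dot z_{1}=-\frac{D}{r}z_{1}+\frac{D\alpha(1-r)}{r}z_{2}$; hence $z_{2},z_{1}\to 0$ exponentially, the affine plane $\mathcal{M}=\{z_{1}=z_{2}=0\}$ is invariant and exponentially attracting, and the full system is asymptotically autonomous with limit the restriction to $\mathcal{M}$. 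On $\mathcal{M}$ one substitutes $X_{i}=S_{in}-S_{i}$ and obtains the triangular scalar pair $\dot S_{2}=(S_{in}-S_{2})(\alpha D-\mu(S_{2}))$ and $\dot S_{1}=\frac{D}{r}(\sigma(S_{2})-S_{1})-\mu(S_{1})(S_{in}-S_{1})$, with $\sigma(S_{2})=\alpha(1-r)S_{2}+(1-\alpha(1-r))S_{in}$ and $\alpha(1-r)=Q_{2}/Q\in(0,1)$, where $S_{2}$ evolves autonomously and drives $S_{1}$.

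The $S_{2}$ equation is exactly the reduced single chemostat of dilution $\alpha D$, so by A2 and Proposition \ref{prop-chemostat} the planar subsystem $(S_{2},X_{2})$ converges (exponentially, the rest points being hyperbolic for $S_{in}\neq\lambda_{+}(\alpha D)$) either to the washout $(S_{in},0)$ or to $(\lambda_{-}(\alpha D),S_{in}-\lambda_{-}(\alpha D))$, the Case-2 separatrix being a codimension-one, hence null, set. In situation (i), $S_{2}\to S_{in}$ forces $\sigma(S_{2})\to S_{in}$, whence the limiting $S_{1}$ field is $\dot S_{1}=(S_{in}-S_{1})(\frac{D}{r}-\mu(S_{1}))$---a single chemostat of dilution $D/r$. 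Proposition \ref{prop-chemostat} with $D$ replaced by $D/r$ then yields the stated alternative: a positive rest point $\lambda_{-}(D/r)$ when $\lambda_{-}(D/r)<S_{in}$ (Cases 2--3) and washout when $\mu(S_{in})<D/r$ (Cases 1--2), the bistable Case 2 being precisely where both conditions hold and the first-tank datum selects the basin. Lifting through $X_{1}=S_{in}-S_{1}-z_{1}$ with $z_{1}\to 0$ exponentially transfers the exponential convergence to $(S_{1},X_{1})$.

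In situation (ii), $S_{2}\to\lambda_{-}(\alpha D)=:s_{2}^{\star}<S_{in}$, and a direct identity---using $D\phi_{\alpha,r}(s)(S_{in}-s)=\frac{D}{r}(\sigma(s_{2}^{\star})-s)$---rewrites the limiting field as $\dot S_{1}=(D\phi_{\alpha,r}(S_{1})-\mu(S_{1}))(S_{in}-S_{1})$. Thus on $(0,S_{in})$ one has $\sgn\dot S_{1}=\sgn(D\phi_{\alpha,r}(S_{1})-\mu(S_{1}))$, i.e. $\dot S_{1}<0$ exactly on $\Gamma_{\alpha,r}(D)$ and $\dot S_{1}>0$ on its complement. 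Consequently each left endpoint of a connected component of $\Gamma_{\alpha,r}(D)$ is an asymptotically stable equilibrium and each interior right endpoint is unstable, and the scalar (gradient-like) flow drives every datum outside the finite set of unstable points to one such left endpoint $S_{1}^{\star}$. For almost every $r$ the crossings of $\mu/D$ with the hyperbola $H_{\alpha,r}$ are transversal, so these sinks are hyperbolic and the convergence is exponential; the exceptional $r$ are those admitting a tangency, a set cut out by $\mu(s)=D\phi_{\alpha,r}(s)$ and $\mu'(s)=D\phi'_{\alpha,r}(s)$, which by analyticity of $\mu$ is a proper, hence null, subset of $(0,1)$. The measure-zero exceptional initial data are the finitely many $S_{1}$-separatrices together with the Case-2 separatrix of $(S_{2},X_{2})$.

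Nonemptiness of $\Gamma_{\alpha,r}(D)$ is immediate: writing $\phi_{\alpha,r}(s)=\frac{1}{r}-\frac{(1-r)\alpha(S_{in}-s_{2}^{\star})}{r(S_{in}-s)}$ shows $\phi_{\alpha,r}(s)\to-\infty$ as $s\to S_{in}^{-}$ while $\mu(S_{in})$ is finite, so a left neighbourhood of $S_{in}$ lies in $\Gamma_{\alpha,r}(D)$. For $\overline{R}_{\alpha}(D)$ I would argue geometrically: $\phi_{\alpha,r}$ is a strictly decreasing branch of hyperbola with asymptote at $s=S_{in}$. If $\mu$ is increasing then $\mu/D-\phi_{\alpha,r}$ is strictly increasing for every $r$, giving a unique root, so $\overline{R}_{\alpha}(D)=(0,1)$. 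In the unimodal case, the whole pencil $\{\phi_{\alpha,r}\}_{r}$ passes through the pivot $(s^{\dagger},1)$ with $s^{\dagger}=S_{in}-\alpha(S_{in}-s_{2}^{\star})$; letting $r\to 0^{+}$ when $s^{\dagger}>0$ (so $\phi_{\alpha,r}\to+\infty$ on $(0,s^{\dagger})$ and $\to-\infty$ on $(s^{\dagger},S_{in})$) collapses $\Gamma_{\alpha,r}(D)$ to the single interval $(s^{\dagger},S_{in})$ with a unique transversal root, hence $r\in\overline{R}_{\alpha}(D)$; the complementary range $s^{\dagger}\le 0$ is handled by the limit $r\to 1^{-}$, where the only crossing is the asymptote-induced one near $S_{in}$, together with continuity of the root count in $r$. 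The main obstacle I expect is twofold: rigorously lifting the reduced scalar convergence to \emph{exponential} convergence of the full trajectory (asymptotic-autonomy arguments plus persistence of hyperbolic attraction under exponentially decaying forcing), and securing transversality simultaneously at all intersection points so that the excluded sets of $r$ and of initial conditions are genuinely of measure zero.
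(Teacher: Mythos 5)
Your treatment of points i.\ and ii.\ is essentially the paper's own argument: your $z_{1},z_{2}$ are exactly the paper's vector $Z$ with the same triangular Hurwitz matrix, your limiting scalar equation $\dot S_{1}=(D\phi_{\alpha,r}(S_{1})-\mu(S_{1}))(S_{in}-S_{1})$ is (\ref{reducedS1}), and the identification of the attractors with the left endpoints of the connected components of $\Gamma_{\alpha,r}(D)$, together with genericity in $r$ via analyticity, is the same. The two obligations you defer at the end are precisely what the paper discharges: it invokes Theorem \ref{thThieme} on asymptotically autonomous semiflows and then checks hyperbolicity of the equilibria of the \emph{full} four-dimensional system through the block-triangular Jacobian in $(Z,S_{1},S_{2})$ coordinates, which yields both the exponential convergence and the measure-zero exceptional set via the Stable Manifold Theorem. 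Two smaller points: chain recurrence of the $\omega$-limit set does not by itself prevent convergence to the repelling rest point $S_{in}$ of the limit equation, and the paper rules out washout of the first tank in case ii.\ by the separate observation that $X_{1}=0$ implies $\dot X_{1}=D\frac{\alpha(1-r)}{r}X_{2}>0$ asymptotically --- a step absent from your sign analysis of the reduced flow; and ``proper, hence null'' is not a valid inference --- the correct statement (the paper's) is that the exceptional $r$ are critical values of the analytic function $\gamma$ of (\ref{gamma}), hence isolated.

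The genuine gap is in your proof that $\overline{R}_{\alpha}(D)\neq\emptyset$, in the branch $s^{\dagger}=\ul S(\alpha)\le 0$ handled by $r\to 1^{-}$. As $r\to 1^{-}$ one has $\phi_{\alpha,r}\to 1$ uniformly on compact subsets of $[0,S_{in})$, so whenever $\lambda_{-}(D)<S_{in}$ the equation $D\phi_{\alpha,r}(s)=\mu(s)$ has a transversal root near $\lambda_{-}(D)$; and if moreover $\lambda_{+}(D)<S_{in}$ --- the bistable case that motivates the whole theorem --- there is a second root near $\lambda_{+}(D)$ in addition to the asymptote-induced one near $S_{in}$, hence at least three roots and $r\notin\overline{R}_{\alpha}(D)$ for every $r$ close to $1$. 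Your claim that ``the only crossing is the asymptote-induced one near $S_{in}$'' is therefore false there, and the configuration is not vacuous: $\ul S(\alpha)\le 0$ only requires $\alpha\ge S_{in}/(S_{in}-\lambda_{-}(\alpha D))$, which is compatible with $\lambda_{+}(D)<S_{in}$ (take a Haldane function with $S_{in}$ large relative to $\lambda_{-}(\alpha D)$ and $\alpha$ moderately above $1$). The paper's Proposition \ref{prop1} shows the opposite picture: in Case II one always has $\overline{R}_{\alpha}(D)\subset(0,\min R^{+}_{\alpha}(D))$, i.e.\ uniqueness holds for \emph{small} $r$ and never for $r$ near $1$, consistent with the heuristics of Section \ref{section3} (one root for small $r$, three for large $r$). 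The repair is the paper's: invert $f_{r}(s)=0$ as $\gamma(s)=r$, note that in the sub-case $\ul S(\alpha)\le\lambda_{-}(D)$ the function $f_{r}$ is decreasing on $[\max(0,\ul S(\alpha)),\lambda_{-}(D)]$ so the root there is unique, and take $r$ below the positive threshold $r^{+}=\min\{\gamma(s)\,\vert\, s\in(\lambda_{+}(D),S_{in})\}=\min R^{+}_{\alpha}(D)$, below which no further root exists on $(\lambda_{+}(D),S_{in})$.
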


Let give some observations on these results.
\begin{itemize}
\item[-] In contrast to the single chemostat, for which the set $\Lambda(D)$ could
be empty, the set
$\Gamma_{\alpha,r}(D)$ is non-empty.  This means that dynamics 
(\ref{chemostat2b}) always admits a positive equilibrium, even when
the washout is the only equilibrium of the single chemostat, contrary
to serial or parallel chemostats (cf Lemma \ref{LemmaSerialParallel}).

\item[-] When the initial condition of the $(S_{2},X_{2})$ sub-system belongs
to the attraction basin of $(S_{in},0)$ (that could be reduced to a
singleton), it is a not a surprise that the asymptotic behavior of the sub-system $(S_{1},X_{1})$
is the same as for a single chemostat with a dilution rate equal to
$D/r$ (cf point i.). Otherwise, the whole state converges to a positive
equilibrium, with a possible multiplicity of equilibria (cf point
ii.). Here, a remarkable feature
is the existence of buffered configurations $(\alpha,r)$ that
possess an unique globally asymptotically stable equilibrium
(when $\alpha D<\mu(S_{in})$ and $r \in \overline{R}_{\alpha}(D)$), in
contrast to the single chemostat or any serial or parallel
configurations for which a bi-stability occur
when the functional response is non-monotonic. 
\end{itemize}

To help grasp the geometric condition (\ref{geometric_condition}) that
is the key for the characterization of the equilibria,
we introduce the number
\begin{equation}
\label{ulS}
\ul S(\alpha) = \alpha S_{2}^{\star}(\alpha) +(1-\alpha)S_{in} \ ,
\end{equation}
that fulfills the remarkable property
\[
\phi_{\alpha,r}(\ul S(\alpha))=1, \qquad \forall r \in (0,1) \ .
\]
We first explicit the condition (\ref{geometric_condition}) on the
specific case of the Haldane function (\ref{Haldane}):
\begin{equation}
\label{polynom}
D(S_{in}-s-\alpha(1-r)(S_{in}-S_{2}^{\star}(\alpha))(K+s+s^{2}/K_{I})=r\bar\mu
s(S_{in}-s) \ .
\end{equation}
$S_{1}^{\star}$ is then a root of a polynomial $P$ of degree 3.
So there exist at most three solutions of
$\phi_{\alpha,r}(s)=\mu(s)/D$. 
For small values of $r$, we remark that
$\phi_{\alpha,r}(0)$ is very large and $\phi_{\alpha,r}$ has a high
slope. On the contrary, for $r$ near to $1$, $\phi_{\alpha,r}(0)$ is
closed to $1$ and $\phi_{\alpha,r}$ has a light slope. Intuitively,
we expect to have only one root for small values of $r$ and three
for large values of $r$.
For $\bar r$ such that there
exists a solution $S_{1}^{\star}$ of $\phi_{\alpha,\bar r}(s)=\mu(s)/D$
and $\phi_{\alpha,\bar r}^{\prime}(s)=\mu^{\prime}(s)$, one has
$P(S_{1}^{\star})=0$ and $P^{\prime}(S_{1}^{\star})=0$, that is
$S_{1}^{\star}$ is a double root of $P$ (and there exists at most
one such double root because $P$ is of degree 3). At such $S_{1}^{\star}$,
the hyperbola $H_{\alpha,\bar r}$ is tangent to the graph of $\mu(\cdot)$.
Intuitively, this corresponds to the limiting case for the parameter $r$ in
between cases for which there is one or three roots (see Figures
\ref{fig-phi} and \ref{fig-phi2} where tangent hyperbola are drawn in
thick line).
\begin{figure}[h]
\begin{center}
\includegraphics[width=5cm]{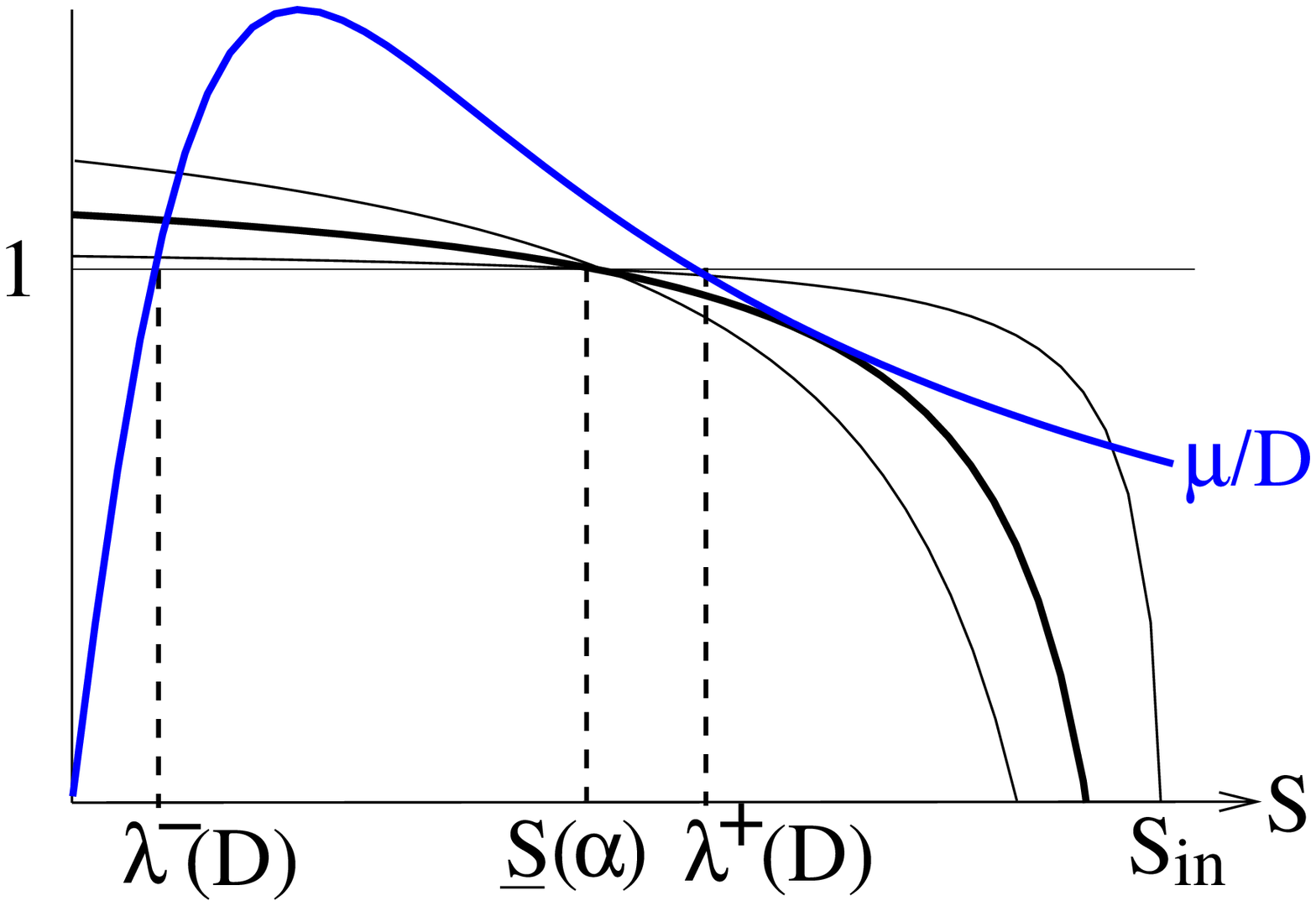} \hspace{2mm}
\includegraphics[width=5cm]{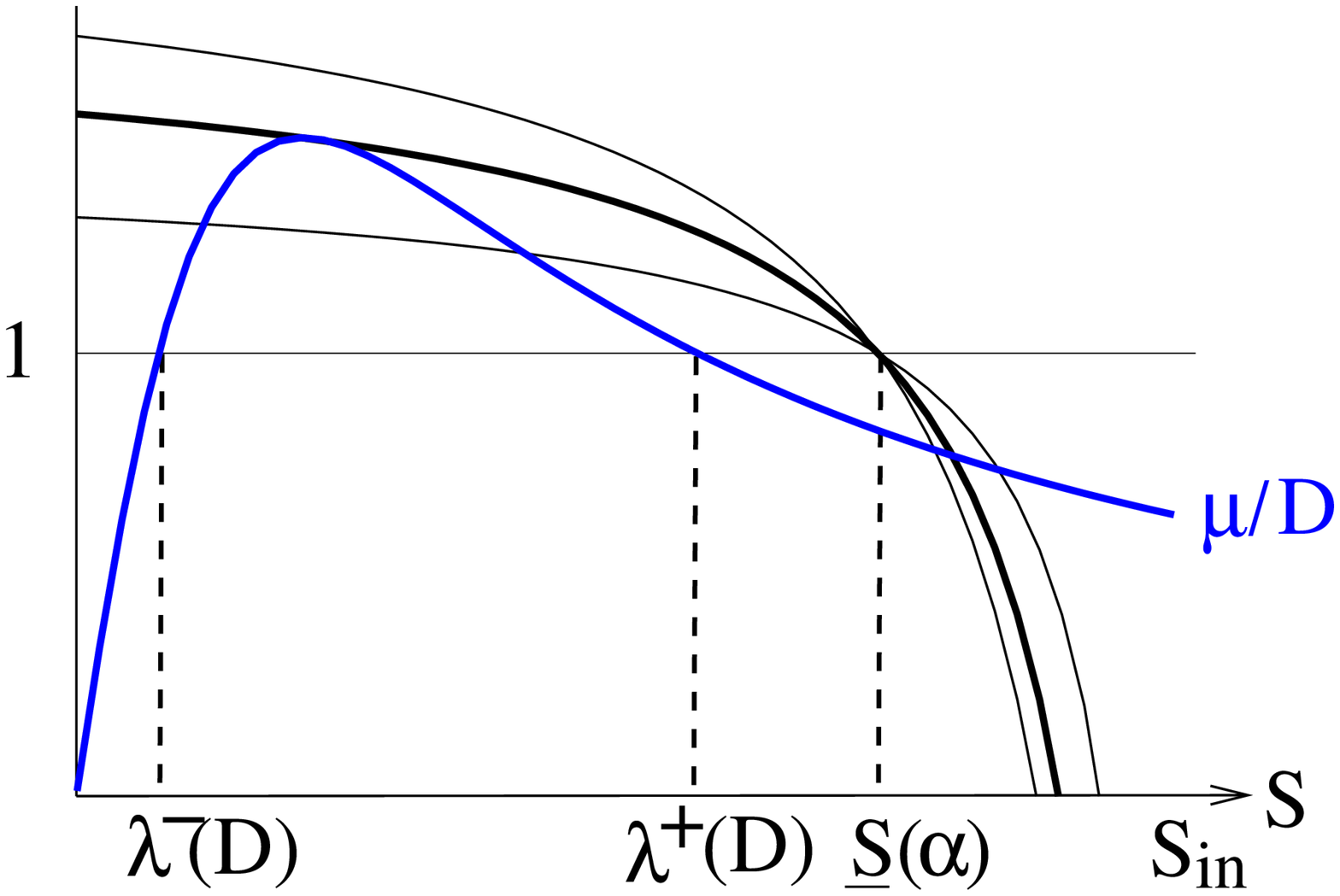}
\caption{Subset of functions $\phi_{\alpha,r}(\cdot)$ when $\ul
  S(\alpha)<\lambda_{+}(D)$ (on the left) and $\ul S(\alpha)>\lambda_{+}(D)$
  (on the right), illustrated with an Haldane function (when
  $\lambda_{+}(D)<S_{in})$) [parameters: $\bar\mu=12$, $K=1$,
  $K_{I}=0.1$, $S_{in}=2$, $D=1.1$, $\alpha=0.64$ (left) / $0.36$ (right)].
\label{fig-phi}}
\end{center}
\end{figure}

\begin{figure}[h]
\begin{center}
\includegraphics[width=5cm]{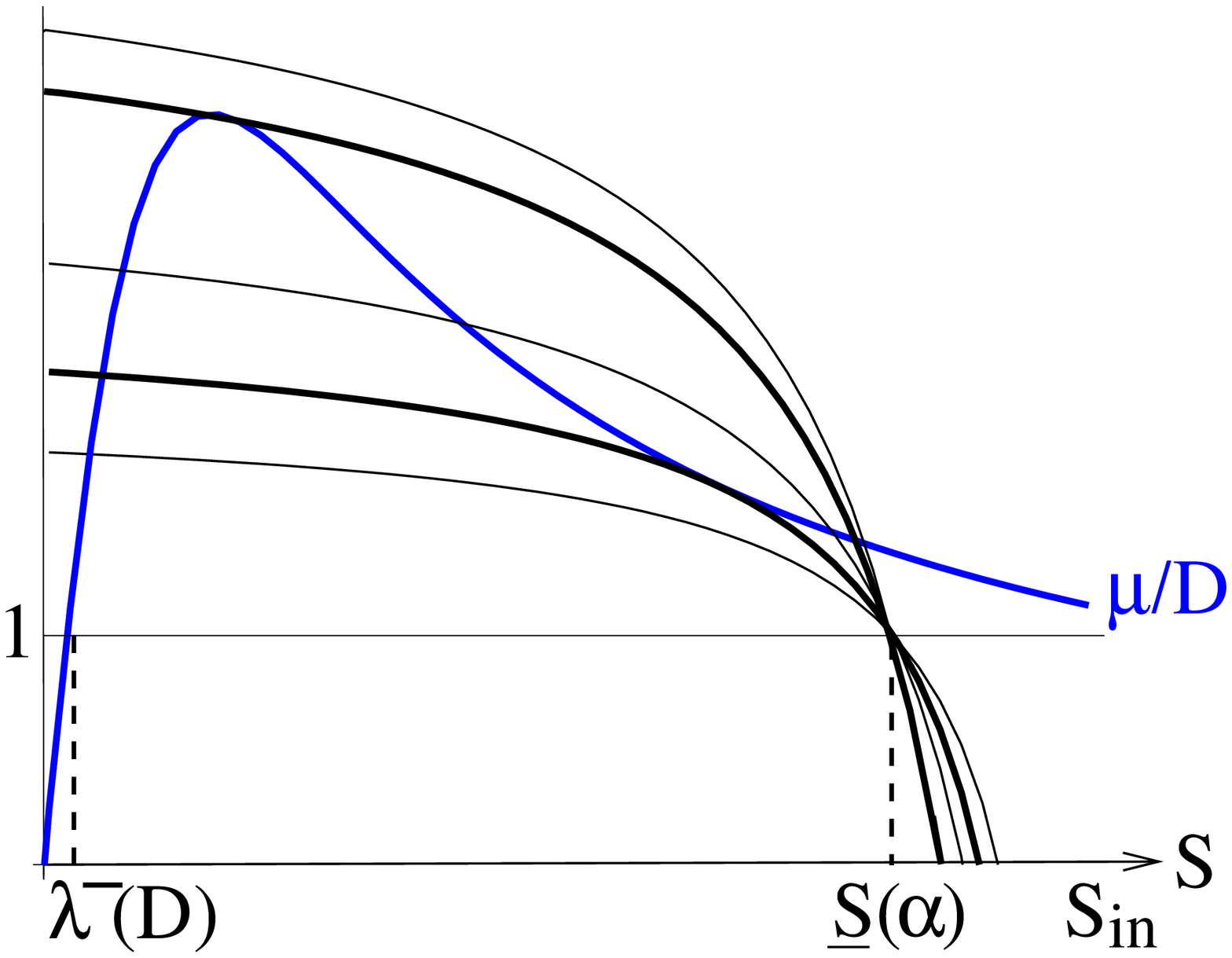} \hspace{2mm}
\includegraphics[width=5cm]{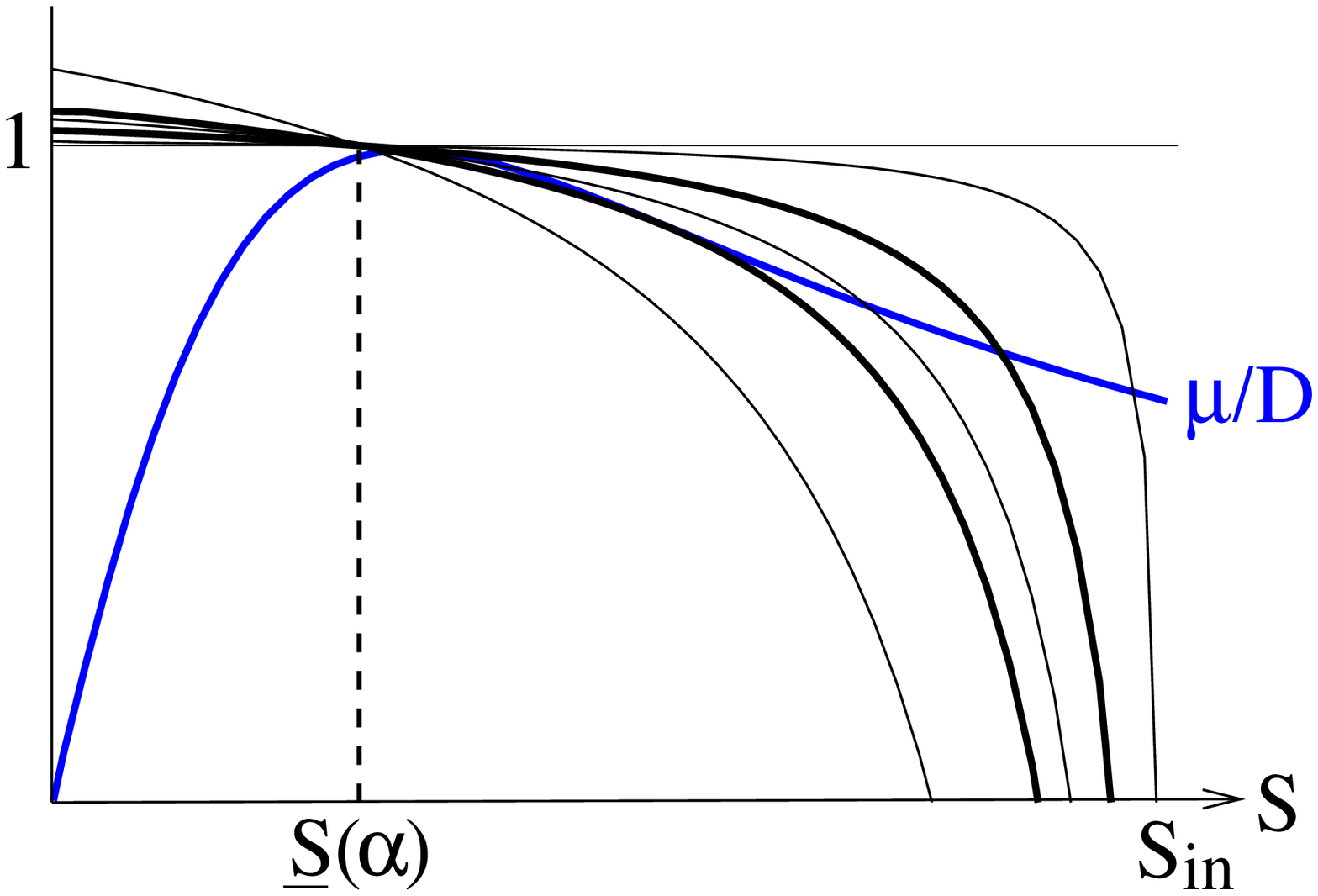}
\caption{Subset of functions $\phi_{\alpha,r}(\cdot)$ illustrated with
  an Haldane function when $\lambda_{-}(D)<S_{in}<\lambda_{+}(D)$ (on
  the left) and when $\Lambda(D)=\emptyset$ (on the right)
[parameters: $\bar\mu=12$, $K=1$,
  $K_{I}=0.1$, $S_{in}=2$ (left) /$1$ (right), $D=0.5$ (left) /
  $1.65$ (right), $\alpha=0.2$ (left) / $0.9$ (right)].
\label{fig-phi2}}
\end{center}
\end{figure}

\medskip

To formalize these observations for more general growth
functions $\mu(\cdot)$ that fulfill Assumptions A1, we consider the
set of $s$ at which the hyperbola $H_{\alpha,r}$ is tangent to the graph of the function
$\mu(\cdot)/D$ and is locally on one side
(that amounts to have $0$ as a local extremum of the function
$\phi_{\alpha,r}(\cdot)-\mu(\cdot)/D$ at $s$):
\begin{equation}
\label{calS}
{\cal S}_{\alpha,r}(D)=\left\{ s \in (0,S_{in}) \mbox{ s.t. }
\min\left\{ n \in \Nset \, \vert \, D\frac{d^{n}\phi_{\alpha,r}}{ds^{n}}(s)\neq 
\frac{d^{n}\mu}{ds^{n}}(s) \right\} \mbox{ is even and larger than 1} \right\}
\end{equation}
along with the set
\begin{equation}
\label{defRalpha}
R_{\alpha}(D)=\left\{ r \in (0,1) \mbox{ s.t. } {\cal S}_{\alpha,r}(D)
\neq \emptyset \right\} \ .
\end{equation}

One can distinguish two cases:
\begin{enumerate}
\item {\em The single chemostat has only one
attracting equilibrium.} Tangencies  of the graphs of $\phi_{\alpha,r}$ and $\mu$
could occur for certain values of $r$ (see Figure \ref{fig-phi2} as an
illustration), leading to non-empty set $R_{\alpha}(D)$ and
multi-equilibria.
Another remarkable feature is that the buffer could create a multiplicity of equilibria.

\item {\em The single chemostat presents a bi-stability.}
The function $\mu$ is necessarily non-monotonic on $(0,S_{in})$
and a tangency of the graphs of $\phi_{\alpha,r}$ and $\mu$ always
occurs for a certain $r$ with an abscissa that is located
\begin{itemize}
\item[-] either at the right of $\lambda_{+}$ when $\ul
  S(\alpha)<\lambda_{+}(D)$ (see the right picture of Figure \ref{fig-phi}),
\item[-] either at the left of $\lambda_{+}$ when $\ul
  S(\alpha)>\lambda_{+}(D)$ (see the left picture of Figure \ref{fig-phi}).
\end{itemize}

\end{enumerate}

In the Appendix, more properties on the sets $R_{\alpha}(D)$ and
the multiplicity of equilibria are given in the Proposition
\ref{prop1}.

\medskip

\begin{remark}
\label{remark1}
Under the conditions of Theorem \ref{main_theo}, consider the number
\begin{equation}
\label{bar_r}
\bar r_{D}(\alpha)=\sup \overline{R}_{\alpha}(D)
\end{equation}
that guarantees that for any $(r,\alpha)$ with $r<\bar
r_{D}(\alpha)$, the buffered chemostat model admits a unique (globally
asymptotically stable) positive equilibrium.

The map $(\alpha,r)\mapsto S_{1}^{\star}(\alpha,r)$, where
$S_{1}^{\star}(\alpha,r)$ is the unique solution of
(\ref{geometric_condition}) on $(0,S_{in})$, is clearly
continuous and one can then consider the limiting map:
\[
\bar S_{1}^{\star}(\alpha)=\lim_{r<\bar r_{D} (\alpha), \, r\to \bar
  r_{D}(\alpha)} S_{1}^{\star}(\alpha,r) \ .
\]
When $\lambda_{+}(D)<S_{in}$, one has
$\bar S_{1}^{\star}(\alpha) \leq \lambda_{+}(D)$  (resp. $\bar
S_{1}^{\star}(\alpha) \geq \lambda_{+}(D)$) when $\ul
S(\alpha)<\lambda_{+}(D)$ (resp. $\ul S(\alpha)>\lambda_{+}(D)$).
Consider, if it exists, a value of $\alpha$, denoted by $\ul \alpha$,
that is such that $\ul S(\ul \alpha)=\lambda_{+}(D)$. 
Although one has
$\phi_{\ul \alpha,r}(\lambda_{+}(D))=\mu(\lambda_{+}(D))/D$ for any $r$, 
there is no reason to have
\[
\lim_{\alpha<\ul \alpha, \, \alpha \to \ul \alpha}\bar S_{1}^{\star}(\alpha)=\lambda_{+}(D) \; 
\mbox{ or } \;
\lim_{\alpha>\ul \alpha, \, \alpha \to \ul \alpha}\bar S_{1}^{\star}(\alpha)=\lambda_{+}(D) \ .
\]
Consequently, the map $\alpha \mapsto \bar r_{D}(\alpha)$ might be discontinuous at
such point $\ul \alpha$.
In Section \ref{section_example}, the non-continuity of
the map $\alpha \mapsto \bar r_{D}(\alpha)$ is illustrated on the Haldane
function.
\end{remark}

\section{Discussion and comparison with the single chemostat}
\label{section4}

In this Section, we discuss the applications of Theorem \ref{main_theo}
in terms of ecological and biotechnological implications for different
buffered configurations.

\subsection{From an ecological point of view}
To better grasp the difference brought by a buffered spatialization
compared to a perfectly-mixed environment, we distinguish two
main cases depending on the washout if it is an attracting equilibrium or not in the
single chemostat.

\subsubsection{Washout is attracting in a single chemostat}
\label{subsection_attracting_washout}
Such situation corresponds to Cases 1 or 2 of Proposition
\ref{prop-chemostat}~: 
\begin{itemize}
\item[-] either the washout is the only equilibrium (and is necessarily
attracting). This happens when the dilution rate $D$ is too high or the
input concentration $S_{in}$ too low, that is when one has  
$D>\mu(S)$ for any $S \in [0,S_{in}]$,
\item[-] either the growth function $\mu(\cdot)$ is non-monotonic on
$(0,S_{in})$ with an non-empty set $\Lambda(D)$ such that
$\lambda_{+}(D)<S_{in}$. The system admits then two attracting
equilibria: a positive one and the washout.
\end{itemize}
For both cases, Theorem \ref{main_theo} shows that there exist buffered
configurations $(\alpha,r)$ (with $S_{in} \in (\alpha D)$ and
$r\in \overline{R}_{\alpha}(D)$) such that the overall dynamics has an
unique globally stable positive equilibrium. 
Recall, from Lemma \ref{LemmaSerialParallel}, that any species cannot
persist in both tanks with a serial or parallel configuration of
the same total volume, differently to the buffered interconnection.
This property demonstrates that a simple
(but particular) spatial structure such as the buffered one can
explain the persistence of a species in an environment that is
unfavorable if it was homogeneous.

Furthermore,
Theorem \ref{main_theo} shows that in absence of initial biomass in the main
tank, a species seeded in the buffer can invade and persist in
the main tank. We conclude that a buffer can play the role of a refuge.

\subsubsection{Single chemostat has a unique positive equilibrium}

We are in the conditions of Case 3 of Proposition
\ref{prop-chemostat}. Let us distinguish monotonic and non-monotonic
response functions.
\begin{itemize}
\item[-] When $\mu(\cdot)$ is monotonic on the interval $(0,S_{in})$, any
buffered configuration admits a unique positive equilibrium (function $\alpha_{\alpha,r}(\cdot)$ being
  decreasing, there exists an unique intersection of the graphs of
  $\mu(\cdot)/D$ and  $\alpha_{\alpha,r}(\cdot)$), that is
globally asymptotically stable. In terms of species survival and
stability, there is no difference with the single chemostat.

\item[-]  When $\mu(\cdot)$ is non-monotonic on the interval
  $(0,S_{in})$, one can consider values $\alpha>1$  such that
  $\lambda_{+}(\alpha D)<S_{in}$. Then, the washout
  equilibrium is attracting in the buffer vessel. For initial
  conditions in its attraction basin, the
  main tank behaves asymptotically as a single chemostat with a
  supply rate (or dilution rate) equal to $D/r$. For $r$ small enough one can have
  $S_{in}\notin \Lambda(D/r)$ or even $\Lambda(D/r)=\emptyset$. In those cases,
  the washout becomes an attracting equilibrium of the overall dynamics.

When the parameter $\alpha$ is such that the buffer has a unique
positive equilibrium, Theorem \ref{main_theo} shows that it is
  possible to have multiple equilibria. For instance, the set
  $\Gamma_{\alpha,r}(D)$ can have two connected components (as illustrated on
  Figure \ref{fig-phi2}). In this case, the system has three positive
  equilibria: the two endpoints of the first connected component and
  the left endpoint of the second one. According to Theorem
  \ref{main_theo}, the first
  and third equilibria are attracting while the second is not.
  Thus species persist in both tanks but the particular spatial
  structure can lead to several regimes of conversion at steady state,
  differently to a perfectly mixed vessel of the same total volume.
  Here, the buffer is playing the opposite role of a refuge: it
  highlights the fragility of a species to persist.
\end{itemize}

\bigskip

Finally, we have shown that the buffered configuration can have positive or negative
effects on the stability of an ecosystem, depending on the
characteristics of the buffer (size and flow rate). It can globally stabilize a
dynamics that is bi-stable in a perfectly mixed environment and avoid then
the washout of the biomass. At the opposite, a buffer can create a
multi-stability or even leads to a complete washout, while the
dynamics has a positive globally asymptotically stable equilibrium in perfectly mixed conditions.

\subsection{From a biotechnological point of view}
\label{subsection_biotechnological}
A typical field of biotechnological applications is the waste-water treatment with micro-organisms. 
For such industries, a usual objective is to reduce the output
concentration of substrate that is pumped out from the main tank. Typically, a
species that is selected to be efficient for low nutrient concentrations could
present a growth inhibition for large concentrations (its growth rate
being thus non-monotonic). Usually, the input
concentration $S_{in}$ is imposed by the industrial discharge and
cannot be changed, but the flow rate $Q$ can be manipulated. 
During the initial stage of continuous stirred bioreactors (that are
supposed to be perfectly mixed), the
biomass concentration is most often low (and the substrate concentration large).
This means that there exists a risk that the initial state
belongs to the attraction basin of the washout equilibrium if one
immediately applies the nominal flow rate $Q$. 
Such situation could also occurs during nominal functioning, under the
temporary presence of a toxic material that could rapidly deplete part of the
microbial population, and leave the substrate concentration higher than
expected. Those situations are well known from the practitioners: 
the process needs to be monitoring with the help of an automatic
control that makes the flow rate $Q$ decreasing in case of deviation
toward the washout. But such a solution requires an upstream storage
capacity when reducing the nominal flow rate, that could be
costly. Keeping a constant input flow rate is thus preferable.
An alternative is to
oversize the volume of the tank so that there is no longer
bi-stability and no need for a controller.
Compared to these two solutions, a design with a main tank and a
buffer (that guarantees a unique positive and globally asymptotically
stable equilibrium) presents several advantages:
\begin{itemize}
\item[-] it does not require to oversize the main tank,

\item[-] it does not require any upstream storage and the implementation of a controller,

\item[-] it allows to seed the initial biomass in the buffer tank only.

\end{itemize}

Notice that a by-pass of a single chemostat is also a way to reduce
the effective flow rate and to avoid a washout. It happens to 
be a
particular case of the buffered configuration with $V_{1}=0$.\\

Nevertheless, there is a price to pay to obtain the global stability
over the single bi-stable tank configuration:
\begin{itemize}
\item[i.] if the buffered configuration has the same total volume than the
  single chemostat, then the output concentration at steady state
  $S_{1}^{\star}$ would be higher than $\lambda_{-}(D)$, meaning that
  the buffered configuration would be less efficient than the single
  chemostat at its (locally asymptotically) stable positive
  equilibrium.

\item[ii.] to obtain the same nominal output $\lambda_{-}(D)$ with a
  buffered configuration, one needs to have a larger total volume.

\end{itemize}
However, considering a single chemostat of volume $V$ that presents a
bi-stability (that is when $\Lambda(D)\neq\emptyset$ and $\lambda_{+}(D)<S_{in}$), one can compare the minimal volume increment required to obtain a
single positive globally asymptotically stable equilibrium
by one of the following scenarios:
\begin{enumerate}
\item[] {\em Scenario 1:} enlarging the volume of the single chemostat by $\Delta V$.
\item[] {\em Scenario 2:} adding a buffer of volume $V_{2}$.
\end{enumerate}

For the first strategy, this amounts to have a new dilution rate equal
to $D/(1+\frac{\Delta V}{V})$. Then, the condition to be in Case 3 of Proposition
\ref{prop-chemostat} is to have
\[
S_{in} \in \Lambda\left(\frac{D}{1+\frac{\Delta V}{V}}\right) \ ,
\]
or equivalently
\begin{equation}
\label{formule_deltaV}
\frac{\Delta V}{V} > \left(\frac{\Delta V}{V}\right)_{\inf}=\frac{D}{\mu(S_{in})}-1 \ .
\end{equation}

For the second strategy, one has to choose first the dilution rate
$D_{2}=Q_{2}/V_{2}$ of the buffer (with $Q_{2}<Q$).
For any positive number $D_{2} < \mu(S_{in})$, there exists a unique positive
equilibrium $(S_{2}^{\star}(D_{2}), S_{in}-S_{2}^{\star}(D_{2}))$ in
the buffer, where
\[
S_{2}^{\star}(D_{2})=\lambda_{-}(D_{2}) \; < \; \bar
s=\lambda_{-}(\mu(S_{in})) \ .
\]
The Proposition \ref{prop5}, given in the Appendix, provides an explicit lower bound on the volume $V_{2}$ to ensure a unique globally
exponentially stable positive equilibrium from any initial condition
with $S_{2}(0)>0$. Furthermore, this bound is
necessarily such that
\[
\left(\frac{V_{2}}{V}\right)_{\inf} < \left(\frac{\Delta
    V}{V}\right)_{\inf} \ .
\]
The benefit of Scenario 2 over Scenario 1 in terms of volume increment
will be numerically demonstrated in Section \ref{section_example}.

\section{ A numerical illustration}
\label{section_example}

In this section, we illustrate numerically the stabilizing effect of a
buffer.
We consider the case of the single chemostat model that
presents a bi-stability (see the discussion in \ref{subsection_attracting_washout}), with a
non-monotonic uptake function given by the Haldane expression
(\ref{Haldane}).
One can easily check that for this function the set
$\Lambda(D)$ defined in (\ref{Lambda}) is non-empty exactly when the condition
\[
\bar\mu/D > 1 + 2\sqrt{\frac{K}{K_{I}}}
\]
is fulfilled. Then, $\lambda_{-}(D)$, $\lambda_{+}(D)$ are given by the
following expressions:
\[
\lambda_{\pm}(D) = \frac{K_{I}(\bar\mu/D-1)\pm\sqrt{K_{I}^{2}(\bar\mu/D-1)^{2}-4KK_{I}}}{2} \ .
\]
Bi-stability occurs when the condition $S_{in}>\lambda_{+}(D)$
is fulfilled (case 2 of Proposition \ref{prop-chemostat}).\\

Recall from Section \ref{section3}, that for the Haldane function, the
solutions of  the equation (\ref{geometric_condition}) are
roots of a polynomial of order $3$ with at most three solutions of
(\ref{polynom}). There is a most one double root, which implies that the set
${\cal S}_{r,\alpha}(D)$ possesses at most one element.
Proposition \ref{prop1} (case II), given in the Appendix, helps to
characterize the set $\overline R_{\alpha}(D)$ depending on the
subsets $R^{-}_{\alpha}(D)$, $R^{+}_{\alpha}(D)$ that are defined in
this Proposition:
\begin{itemize}
\item[-] the set $R^{+}_{\alpha}(D)$ is a singleton, because there are at
  most three equilibria,
\item[-]  $R^{-}_{\alpha}(D)\cap R^{+}_{\alpha}(D)=\emptyset$ because
  ${\cal S}_{r,\alpha}(D)$ possesses at most one element,
\item[-] when $R^{-}_{\alpha}(D)$ is non-empty, one has $\max
  R^{-}_{\alpha}(D)<\min R^{+}_{\alpha}(D)$: for any $r \in (\min R^{-}_{\alpha}(D),\max R^{-}_{\alpha}(D))$, 
equation (\ref{geometric_condition}) has at least three solutions on an
interval $I$, and for $r
\in (\min R^{+}_{\alpha}(D),1)$ at least two on another interval $J$, where $I$
and $J$ are disjoint. If $\max R^{-}_{\alpha}(D)\geq\min
R^{+}_{\alpha}(D)$, there would
exist at least five solutions of equation (\ref{geometric_condition}) on $(0,S_{in})$.
\end{itemize}

We study now the set of ``stable'' buffered configurations ${\cal
  C}_{D}$ as the set of pairs $(\alpha,r)$ such that the
buffered chemostat model admits a unique positive equilibrium. 
The upper boundary of ${\cal  C}_{D}$ is thus given by the curve
\[
\alpha \in (0,\mu(S_{in})/D] \; \mapsto \; \bar r_{D}(\alpha)
\]
where $\bar r_{D}(\alpha)$ is the single element of the set
$R^{+}_{\alpha}(D)$.
Notice that the limiting case $\alpha D=\mu(S_{in})$ can
have also global stability (see Lemma \ref{lemma} in the Appendix).
The number $\bar r_{D}(\alpha)$ can then be determined numerically as the
unique minimizer of the function
\[
F_{\alpha}(r,s)=\left(\mu(s)/D-\phi_{\alpha,r}(s)\right)^{2}+
\left(\mu^{\prime}(s)/D-\phi_{\alpha,r}^{\prime}(s)\right)^{2}
\]
on $(0,1)\times \{s\in (\lambda^{-}(D),S_{in}) \mbox{ s.t. }
(s-\lambda^{+}(D))(\lambda^{+}(D)-\ul S(\alpha))\geq 0\}$
that is, for the Haldane function:
\[
F_{\alpha}(r,s)=\left(\frac{(\bar\mu/D)
    s}{K+s+s^{2}/K_{I}}-\frac{1}{r}+\alpha\frac{1-r}{r}\frac{S_{in}-\lambda_{-}(\alpha
    D)}{S_{in}-s}\right)^{2}\!\!+
\left(\frac{\bar\mu/D(K-s^{2}/K_{I})}{(K+s+s^{2}/K_{I})^{2}}+\alpha\frac{1-r}{r}\frac{S_{in}-\lambda_{-}(\alpha
    D)}{(S_{in}-s)^{2}}\right)^{2}
\]
where $\ul S(\alpha)$ is defined in (\ref{ulS}).
For the parameters given in Table \ref{table1}, we have computed
numerically the domains ${\cal C}_{D}$ for different values of $S_{in}$,
depicted on Figure \ref{figdomain}.
\begin{table}[h!]
\begin{center}
\begin{tabular}{|c|c|c|c||c|c|}
\hline
$\bar\mu$ & $D$ & $K$ & $K_{I}$\ & $\lambda_{-}(D)$ & $\lambda_{+}(D)$\\
\hline
$12$ & $1$ & $1$ & $0.8$ & $\simeq 0.103$ & $\simeq 0.777$\\
\hline
\end{tabular}
\end{center}
\caption{Parameters of the Haldane function and the corresponding
  values of $\lambda_{-}(D)$ , $\lambda_{+}(D)$.\label{table1}}
\end{table}
\begin{figure}[h!]
\begin{center}
\includegraphics[height=6cm]{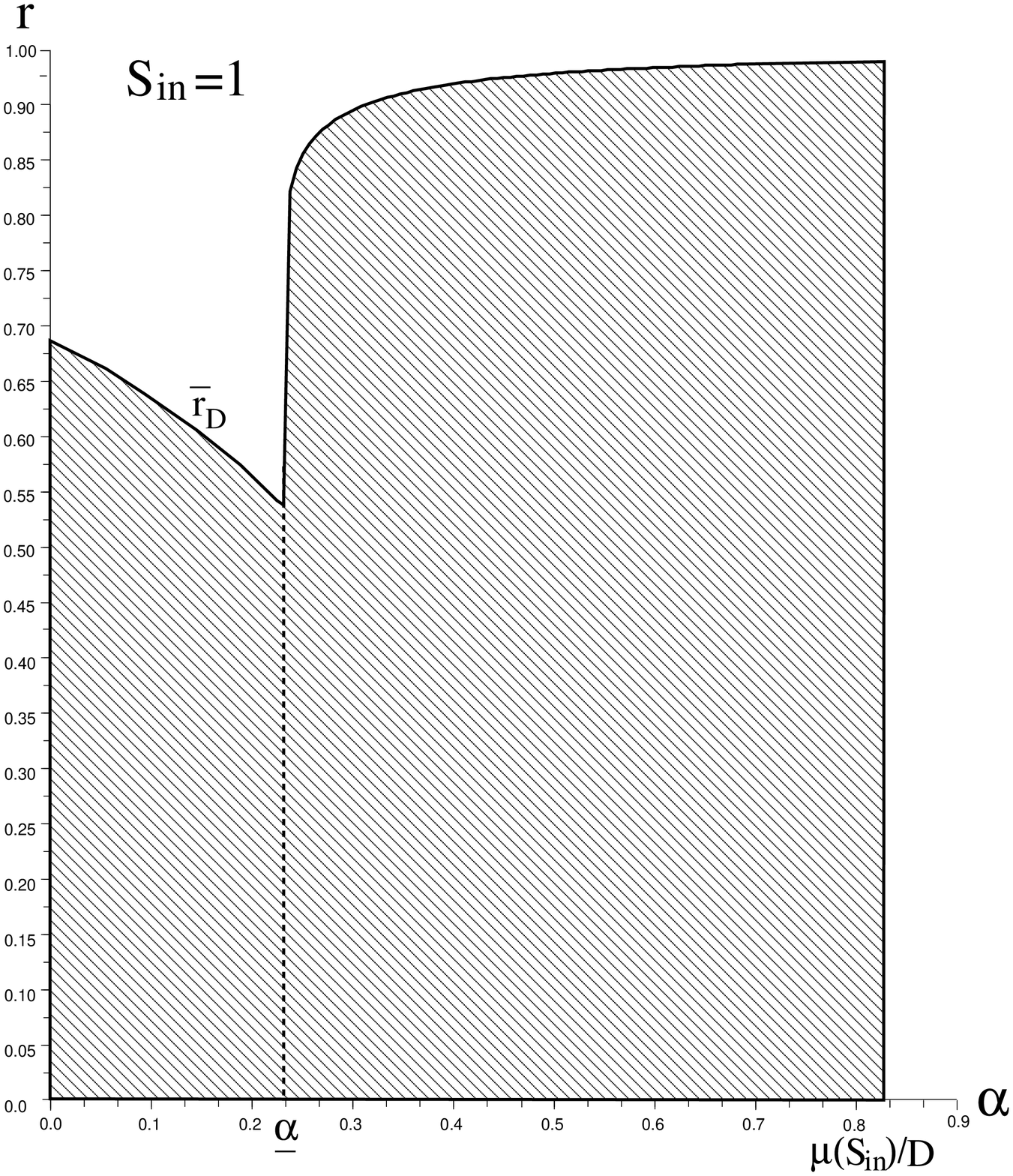}
\hspace{2mm}
\includegraphics[height=6cm]{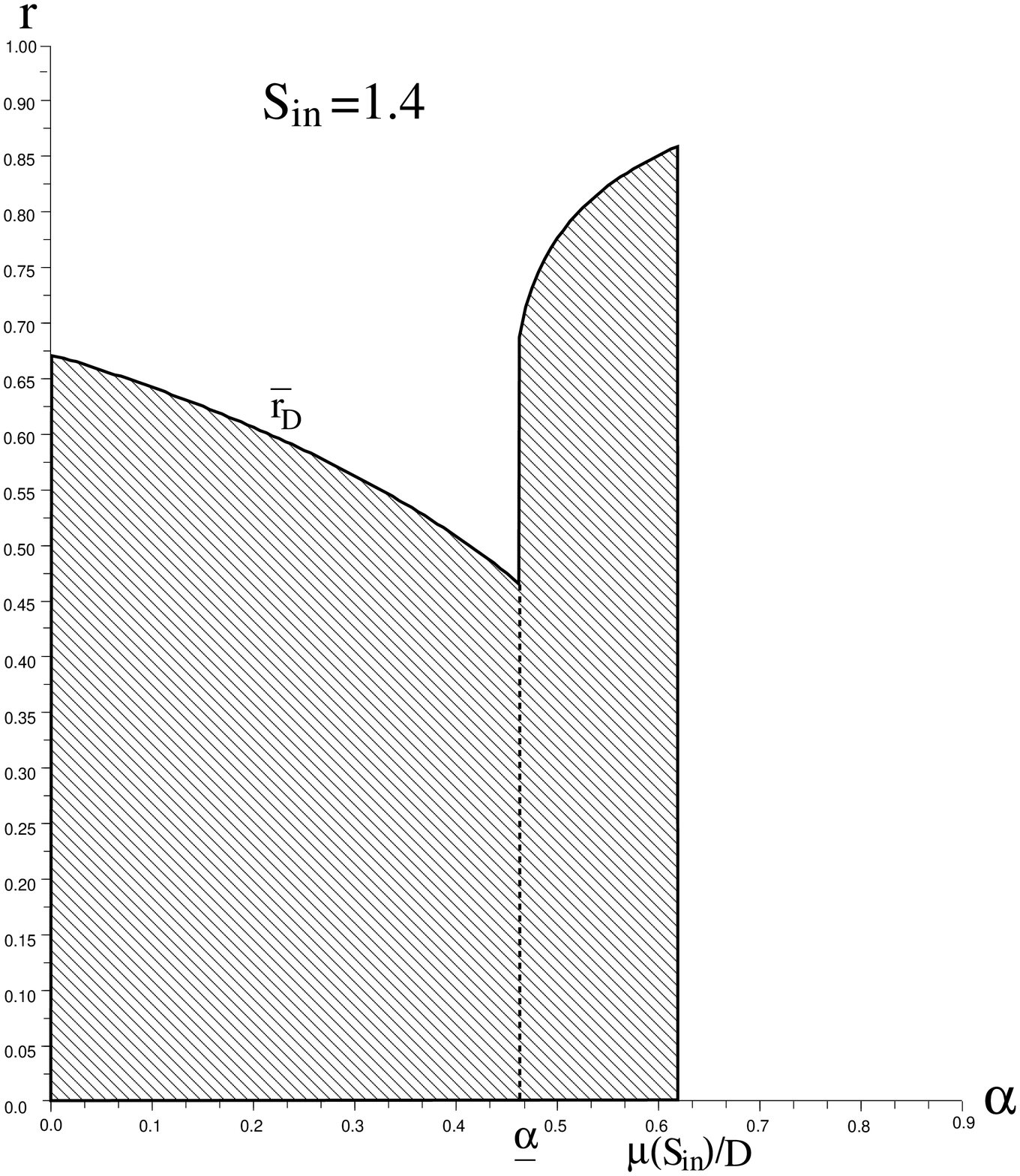}
\hspace{2mm}
\includegraphics[height=6cm]{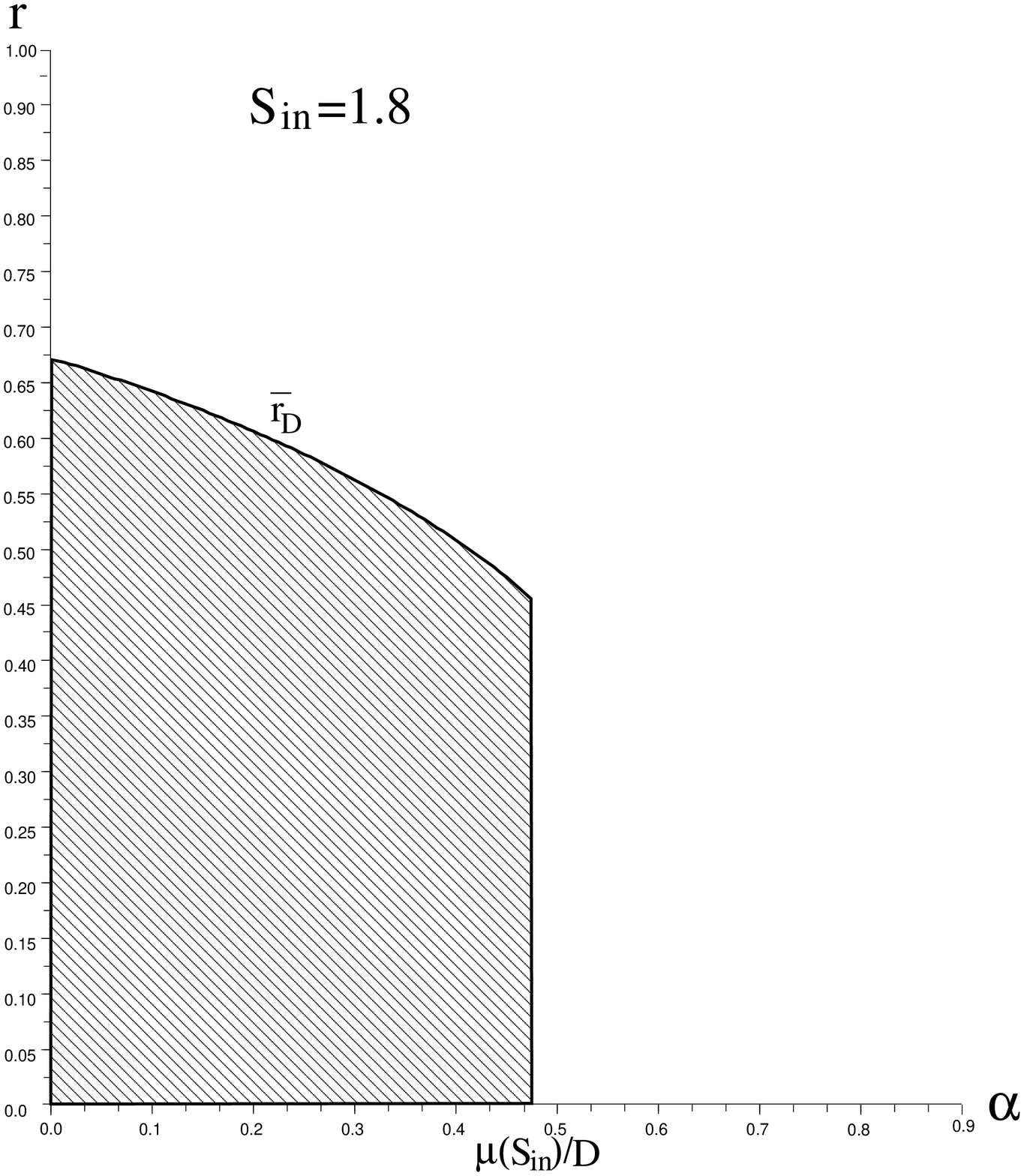}
\caption{Domain ${\cal C}_{D}$ of stable configurations for different
  values of $S_{in}$.\label{figdomain}}
\end{center}
\end{figure}
One can see that the map $\alpha
\mapsto \bar r_{D}(\alpha)$ is discontinuous at $\alpha=\ul \alpha$, where 
$\ul \alpha$ is such that $\ul S(\ul \alpha)=\lambda_{+}(D)$ (when it
exists), as mentioned in  Remark \ref{remark1}.
On Figure \ref{figlimiting} one can see that the two limiting
hyperbolas $H_{\alpha,\bar r(\alpha)}$ about $\ul \alpha$
are different in a such a case.
\begin{figure}[ht]
\begin{center}
\includegraphics[width=6cm]{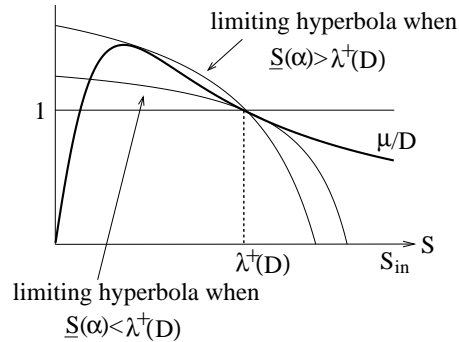}
\caption{The limiting hyperbolas $H_{\alpha,\bar
  r(\alpha)}$ about $\alpha=\ul \alpha$ (for $S_{in}=1.4$).\label{figlimiting}}
\end{center}
\end{figure}
So, this study reveals the role of the input concentration $S_{in}$ on the
shape of the domain ${\cal C}_{D}$.\\

Finally, we have compared the two scenarios discussed in
\ref{subsection_biotechnological} for improving the stability of the single
chemostat, by enlarging its volume or adding a buffer,
given respectively by formulas (\ref{formule_deltaV}) and
(\ref{condV2}).
For the parameters given in Table \ref{table1}, the numerical
comparison is reported on Figure \ref{fig_deltaV} as a function of the
input concentration $S_{in}$.
\begin{figure}[h!]
\begin{center}
\includegraphics[width=6cm]{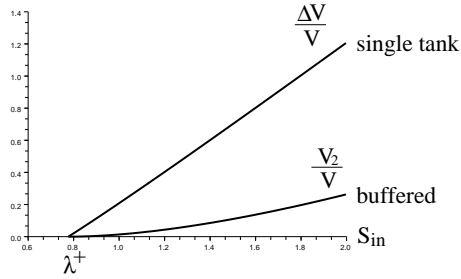}
\caption{Comparison of the minimal increase of volume equired to obtain the global
  stability (as function of the input concentration).\label{fig_deltaV}}
\end{center}
\end{figure}\\
As expected, the buffered chemostat requires less volume
augmentation, but one can also discover that 
this advantage becomes more significant as the input
concentration $S_{in}$ is higher.
Finally, this study demonstrates on a concrete example the flexibility of the buffered
chemostat in the choice of possible configurations, with two
parameters to be tuned (instead of one for the single chemostat).

\section{Conclusion}

The present analysis illustrates how the addition of a buffer to
chemostat alters the multiplicity and stability of their equilibria.
This property has several impacts on theoretical
ecology as well as for bio-industrial applications.
\begin{itemize}
\item[-] From an ecological viewpoint, a spatial pattern with a
buffer can explain why a species can persist in an environment that is
unfavorable if it was perfectly mixed. On the opposite, the
emergence of a buffer with particular characteristics can destabilize 
a regime that is stable under perfectly mixed conditions, and could lead to
the extinction of the species. Nevertheless, such a case occurs only for
``fragile'' species with non-monotonic response function.
\item[-] For industrial applications, such as waste-water purification
  or pharmaceutic production, a buffered configuration of two tanks,
  instead of one or serial or parallel interconnections, present
  several advantages when there is an inhibition in the growth
  rate. It provides an easy and robust way to prevent the washout of the
  biomass in the process, without requiring upstream storage or
  real-time controller. 
\end{itemize}
The numerical study has also revealed other interesting characteristics of
the buffered chemostat.
First, the size of the ``buffer'' or the additional tank that provide such properties could be relatively small.
Secondly, the shape of the set of buffered configurations that provide a
unique (globally asymptotically stable) positive equilibrium depends
on the density of the supplied resource, with a threshold that makes
this shape non smooth. \\

Finally, those results provide new insights on the role of spatial structures
in resource/consumer models for natural ecosystems, and new
potential strategies for the design of industrial bioprocesses. 
Of course, more complex interconnections could be considered, with for
instance an additional output from the buffer. However, the main contribution of the
present work is to show that a simple configuration with only two
parameters can change radically the overall dynamic behavior. The
buffered chemostat appears to be the simplest pattern that can provide global
stability, while any serial or parallel configurations cannot do.

Our study considered a single strain.
According to the Competitive Exclusion Principle, it is not
(generically) possible to have more than one species persisting in the buffer tank, but this does not prevent to have
coexistence with another species in the main tank,
which is not possible with a single chemostat.
Consequently, it might be relevant to study the dynamics of the buffered
chemostat with different persistent species in the buffer and in the
main tank.\\

\section*{Acknowledgments}
The authors are grateful to INRA and INRIA supports
within the French VITELBIO (VIRtual TELluric BIOreactors) research
program.
The authors thank also Prof.~Denis Dochain, CESAME,
Univ. Louvain-la-Neuve, for fruitful discussions.
The authors would also like to thank the anonymous referees for their
relevant suggestions for improvements of our initial work.\\

\section*{Appendix}

\bigskip

{\bf Proof of Lemma \ref{LemmaSerialParallel}.}

In the serial connection, the dynamics of the first tank of
  volume $V_{1}$ is given by equations (\ref{chemostat0}) where $V$ is
  replaced by $V_{1}\leq V$.
  Its dilution rate is then equal to $Q/V_{1}$, that is greater than
  $Q/V$ and consequently one has $S_{in}\notin
  \Lambda(Q/V_{1})$. According to Proposition \ref{prop-chemostat},
  only Cases 1 or 2 can occur in the first tank.

In the parallel connection, the dynamics of each tank of volume
  $V_{i}$ and flow rate $Q_{i}$ is given by equations
  (\ref{chemostat0}) where $V$  and $Q$ are replaced by $V_{i}$ and $Q_{i}$.
  Denote $r_{i}=V_{i}/V$ and $\alpha_{i}=Q_{i}/Q$, and notice that one has
  $\sum_{i}r_{i}=\sum_{i}\alpha_{i}=1$. Then, the dilution rate
  $D_{i}$ in the tank  $i$ is equal to $\alpha_{i}/r_{i}D$. 
  According to Proposition
  \ref{prop-chemostat}, a necessary condition for having the washout
  equilibrium repulsive in each tank is to have $D_{i}<D$
  for any $i$, that is $\alpha_{i}<r_{i}$, which contradicts $\sum_{i}r_{i}=\sum_{i}\alpha_{i}=1$.\qed\\

\bigskip

Before giving the proof of Theorem \ref{main_theo}, we present in the
next proposition a
series of results concerning the multiplicity of equilibria and the
characterization of the sets $\overline{R}_{\alpha}(D)$ defined
in (\ref{defRbar}).

\begin{proposition}
\label{prop1}
Assume that Hypotheses A1 are fulfilled. Fix $D>0$ and
take a positive number $\alpha$ such that $\Lambda(\alpha D)\neq
\emptyset$ and $\lambda_{-}(\alpha D) < S_{in}$.
Let $S_{2}^{\star}(\alpha) \in (0,S_{in})$ be such
that $\mu(S_{2}^{\star}(\alpha))=\alpha D$. Then, for any $r \in
(0,1)$ there exists an equilibrium $(S_{1}^{\star},S_{in}-S_{1}^{\star},S_{2}^{\star}(\alpha),S_{in}-S_{2}^{\star}(\alpha))$
of (\ref{chemostat2b}), with
\begin{equation}
\label{S1star}
S_{1}^{\star} \in \left\vert\begin{array}{ll}
(\underline S(\alpha),S_{in}) & \mbox{when } \Lambda(D)=\emptyset
\mbox{ or } \underline S(\alpha)\notin \Lambda(D) \ ,\\
{[\lambda_{-}(D),\underline S(\alpha)]} & \mbox{when }
\underline S(\alpha))\in \Lambda(D) \ .
\end{array}\right.
\end{equation}
Furthermore, the set $R_{\alpha}(D)$ defined in (\ref{defRalpha}) is not reduced to a singleton
when it is non-empty. We distinguish two different cases:

\begin{itemize}
\item[] {\em Case I:} $\Lambda(D)=\emptyset$ or $\lambda_{-}(D)\geq
  S_{in}$ or $\lambda_{+}(D)\geq S_{in}$. One has
\[
\overline{R}_{\alpha}(D) = \left|
\begin{array}{ll}
(0,1) & \mbox{when } R_{\alpha}(D)=\emptyset ,\\
(0,1)\setminus\left[\min R_{\alpha}(D),\max
  R_{\alpha}(D)\right]  & \mbox{when } R_{\alpha}(D)\neq \emptyset .
\end{array}\right.
\]
For $r\notin \overline{R}_{\alpha}(D)$, the exist at least three equilibria
  with $S_{1}^{\star} \in (\underline S(\alpha),S_{in})$ when
  $\Lambda(D)=\emptyset$ or $S_{1}^{\star} \in
  (\lambda_{-}(D),\underline S(\alpha))$ when
  $\Lambda(D)\neq\emptyset$.\\

\item[] {\em Case II:} $\lambda_{+}(D)< S_{in}$.
We consider the partition of the set $R_{\alpha}(D)$:
\begin{eqnarray}
\label{defR-}
R^{-}_{\alpha}(D) & =  & \{ r \in (0,1) \; \vert \; \exists s \in
{\cal S}_{\alpha,r}(D) \mbox{ with } (s-\ul S(\alpha))(\lambda_{+}(D)-\ul
S(\alpha))< 0 \} \ ,\\[2mm]
\label{defR+}
R^{+}_{\alpha}(D) & = & \{ r \in (0,1) \; \vert \; \exists s \in
{\cal S}_{\alpha,r}(D) \mbox{ with } (s-\lambda_{+}(D))(\lambda_{+}(D)-\ul
S(\alpha))\geq 0 \} \ .
\end{eqnarray}
Then, the set $R^{+}(\alpha)$ is non-empty, and the set $R^{-}(\alpha)$ is not
reduced to a singleton when it is non-empty. One has
\[
\overline{R}_{\alpha}(D)=\left|\begin{array}{ll}
(0,\min R^{+}(\alpha)) & \mbox{when } R^{-}(\alpha)=\emptyset \ ,\\
(0,\min R^{+}(\alpha)) \; \cap \; 
(0,1)\setminus [\min R^{-}(\alpha),\max R^{-}(\alpha)] & \mbox{when }
R^{-}(\alpha)\neq \emptyset \ .
\end{array}\right.
\]
For any $r\in (\min R^{+}(\alpha),1)$, there exist at least
two equilibria such that $(\ul
S(\alpha)-S^{\star}_{1})(\lambda_{+}(D)-\ul S(\alpha))\geq 0$, and at
least four for $r$ in a subset of $(\min R^{+}(\alpha),1)$
when $R^{+}(\alpha)$ is not reduced to a singleton.

When $R^{-}(\alpha)$ is non-empty, for any
$r \in (\min R^{-}(\alpha),\max R^{-}(\alpha))$, there exist at least
three equilibria 
such that $(\ul
S(\alpha)-S^{\star}_{1})(\lambda_{+}(D)-\ul S(\alpha))<0$.
\end{itemize}
\end{proposition}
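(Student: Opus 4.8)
The plan is to reduce the whole analysis to a single scalar equation and to read off every assertion from the graph of one auxiliary function. First I would record the elementary properties of the hyperbola (\ref{phi}): for fixed $r$, $\phi_{\alpha,r}(\cdot)$ is strictly decreasing and concave on $(0,S_{in})$ with $\phi_{\alpha,r}(s)\to-\infty$ as $s\to S_{in}^{-}$, it passes through the $r$-independent point $(\ul S(\alpha),1)$ coming from (\ref{ulS}), and for each fixed $s$ it is monotone in $r$ (decreasing towards $1$ when $s<\ul S(\alpha)$, increasing towards $1$ when $s>\ul S(\alpha)$), since the whole $r$-dependence enters through the single decreasing factor $\beta:=(1-r)/r$. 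Rewriting the equilibrium condition (\ref{geometric_condition}) as $\mu(s)/D-1=\beta\,(\ul S(\alpha)-s)/(S_{in}-s)$ and solving for $\beta$, a positive equilibrium with $s\neq\ul S(\alpha)$ is exactly a solution of $h(s)=\beta$ where
\[
h(s)=\frac{(\mu(s)-D)(S_{in}-s)}{D\,(\ul S(\alpha)-s)} \ .
\]
Since $r\mapsto\beta=(1-r)/r$ is a decreasing bijection of $(0,1)$ onto $(0,\infty)$, the number of positive equilibria at a given $r$ equals the number of level points $\{h=\beta\}$, a tangency (an element of ${\cal S}_{\alpha,r}(D)$, cf.\ (\ref{calS})) is precisely a local extremum of $h$ at the level $\beta$, and $R_{\alpha}(D)$ from (\ref{defRalpha}) is the image, under $\beta\mapsto 1/(1+\beta)$, of the set of positive critical values of $h$.

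Next I would settle existence and the localization (\ref{S1star}) by a sign/intermediate-value argument on $g_{r}:=\mu/D-\phi_{\alpha,r}$. Because $\phi_{\alpha,r}(\ul S(\alpha))=1$, one has $g_{r}(\ul S(\alpha))=\mu(\ul S(\alpha))/D-1$, while $g_{r}(s)\to+\infty$ as $s\to S_{in}^{-}$. If $\ul S(\alpha)\notin\Lambda(D)$ (in particular if $\Lambda(D)=\emptyset$) then $g_{r}(\ul S(\alpha))\leq 0$ and the intermediate value theorem yields a root in $(\ul S(\alpha),S_{in})$; if $\ul S(\alpha)\in\Lambda(D)$ then $g_{r}(\ul S(\alpha))>0$ whereas at $\lambda_{-}(D)$ one has $\mu(\lambda_{-}(D))=D$ and $\phi_{\alpha,r}(\lambda_{-}(D))>\phi_{\alpha,r}(\ul S(\alpha))=1$ (monotonicity of $\phi_{\alpha,r}$ and $\lambda_{-}(D)<\ul S(\alpha)$), so $g_{r}(\lambda_{-}(D))<0$ and a root lies in $[\lambda_{-}(D),\ul S(\alpha)]$. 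This is exactly (\ref{S1star}), valid for every $r\in(0,1)$; moreover the same sign of $\mu(\ul S(\alpha))-D$ fixes the side of the pole of $h$ on which $h\to+\infty$, which is where the unique large-$\beta$ solution sits, so the two descriptions agree.

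The core is then a fold-counting argument driven by $h$ and by the monotone family. I would compute the two boundary counts: as $r\to0^{+}$ ($\beta\to+\infty$) the level set $\{h=\beta\}$ reduces to the single localized equilibrium clustering at the pole $\ul S(\alpha)$, while as $r\to1^{-}$ ($\beta\to0$) the equilibria tend to the roots of $\mu(s)=D$ in $(0,S_{in})$ — at most the single $\lambda_{-}(D)$ in Case I, both $\lambda_{\pm}(D)$ in Case II. Since each branch is monotone and continuous in $r$, the count changes only when $\beta$ crosses a critical value of $h$, and a tangency forces $\phi_{\alpha,r}'=\mu'/D<0$, i.e.\ it lives on the decreasing part of $\mu$, which also explains the classification (\ref{defR-})--(\ref{defR+}) of tangency abscissae relative to $\ul S(\alpha)$ and $\lambda_{+}(D)$. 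In Case I the count is $1$ at both ends, so any interior pair created at a fold must be annihilated at another fold: $\min R_{\alpha}(D)<\max R_{\alpha}(D)$, $R_{\alpha}(D)$ is not a singleton, there are three equilibria for $r\in[\min R_{\alpha}(D),\max R_{\alpha}(D)]$ and one outside, whence $\overline R_{\alpha}(D)=(0,1)\setminus[\min R_{\alpha}(D),\max R_{\alpha}(D)]$. In Case II there are two equilibria as $r\to1^{-}$ but one as $r\to0^{+}$, so the branch tending to $\lambda_{+}(D)$ must be created at a fold, the always-present fold value generating $R^{+}_{\alpha}(D)$; any tangency on the opposite side of $\ul S(\alpha)$ is again an internal pair, so $R^{-}_{\alpha}(D)$ is not a singleton, and intersecting the two mechanisms yields the stated formula for $\overline R_{\alpha}(D)$ together with the multiplicity counts.

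The hard part will be the bookkeeping in Case II: controlling the graph of $h$ across its pole at $\ul S(\alpha)$ and the two sub-cases $\ul S(\alpha)\lessgtr\lambda_{+}(D)$ (the situations of Figure \ref{fig-phi}), handling the degenerate limit $r\to1^{-}$ in which two distinct equilibria collapse onto $\lambda_{\pm}(D)$, and rigorously excluding that a pair disappears by escaping through $s=0$, $s\to S_{in}$ or the pole rather than through a second fold. Analyticity of $\mu$ (Assumptions A1) is what guarantees that $\{h=\beta\}$ has finitely many points, that ${\cal S}_{\alpha,r}(D)$ is finite and that its elements are the even-order contacts singled out in (\ref{calS}), so that the generic fold picture and the non-singleton conclusions are justified rather than merely plausible.
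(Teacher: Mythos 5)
Your core reduction is, up to a reparametrization, exactly the paper's own: the paper introduces $\gamma(s)=\frac{\ul S -s}{\ul S - S_{in}+(S_{in}-s)\mu(s)/D}$ and proves $f_{r}(s)=0\Leftrightarrow\gamma(s)=r$, and your $h$ satisfies $\gamma=1/(1+h)$ with $r=1/(1+\beta)$, so your level sets, critical values, and identification of ${\cal S}_{\alpha,r}(D)$ with local extrema are the same objects in different coordinates. The divergence is in how the levels are counted, and there your proposal has genuine gaps. First, the boundary counts on which the whole fold-counting argument rests are wrong. In Case II one has $\mu(S_{in})<D$ (since $S_{in}>\lambda_{+}(D)$ and $\mu$ decreases beyond $\hat S$), hence $h>0$ near $S_{in}$ with $h(s)\to 0$ as $s\to S_{in}^{-}$ (equivalently $\gamma(s)\to 1$); so for $r$ close to $1$ there is an equilibrium branch with $S_{1}^{\star}\to S_{in}$, in addition to the branches near $\lambda_{-}(D)$ and $\lambda_{+}(D)$: the count as $r\to 1^{-}$ is at least three, not two. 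Similarly, in Case I with $\Lambda(D)=\emptyset$ there is no root of $\mu=D$ at all, and the unique branch tends to $S_{in}$. The paper gets these counts right precisely by evaluating $\gamma$ at the ends of each interval ($\gamma(\lambda_{\pm})=1$, $\gamma(s)\to 1$ as $s\to S_{in}$, $\gamma(\ul S)=0$), which is the step your outline replaces by an incorrect assertion.

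Second, what you label ``the hard part'' and defer is not bookkeeping; it is the actual content of the proof. (i) Ruling out that a pair of solutions appears or disappears by escaping through $s=0$, $s\to S_{in}$, or the pole is exactly what validates statements like ``uniqueness holds precisely outside $[\min R_{\alpha}(D),\max R_{\alpha}(D)]$'' or ``$\overline{R}_{\alpha}(D)\subset(0,\min R^{+}_{\alpha}(D))$''; in the paper this is delivered by the boundary values of $\gamma$ on each interval (levels $r\in(0,1)$ cannot cross the values $0$ and $1$ attained at the interval ends), and without it fold counting proves nothing about $\overline{R}_{\alpha}(D)$. (ii) The degenerate sub-case $\ul S(\alpha)=\lambda_{+}(D)$, where $\mu(\ul S(\alpha))=D$ cancels the pole of $h$, requires a separate argument: the paper extends $\gamma$ across $\ul S(\alpha)$ by L'H\^{o}pital's rule, checks it remains $C^{1}$ there (with $\gamma(\ul S(\alpha))\in(0,1)$ because $\mu^{\prime}(\ul S(\alpha))<0$), and only then identifies $\bar r=\min R^{+}_{\alpha}(D)$ and $R^{-}_{\alpha}(D)=\emptyset$. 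As submitted, then, your proposal duplicates the paper's key construction but, where it departs from the paper's interval-by-interval intermediate-value analysis, it relies on miscounted limits and on unexecuted steps that are precisely where Proposition \ref{prop1} is actually proved.
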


\bigskip
\noindent {\em Remark.}
In Case II, the tangency of the graphs of $\phi_{\alpha,r}$ and $\mu$ occurs for a certain $r$ with an abscissa that is located
\begin{itemize}
\item[-] either at the right of $\lambda_{+}$ when $\ul
  S(\alpha)<\lambda_{+}(D)$,
\item[-] either at the left of $\lambda_{+}$ when $\ul
  S(\alpha)>\lambda_{+}(D)$.
\end{itemize}
These cases correspond to the subset $R_{\alpha}^{+}(D)$ while the subset
$R_{\alpha}^{-}(D)$ corresponds to other tangencies that could occur (but
that do not necessarily exist) on either side of $\ul S(\alpha)$.\\

\noindent {\bf Proof of Proposition \ref{prop1}.}

Fix $D$ and $\alpha$ such that $\Lambda(\alpha D)\neq\emptyset$
and $\lambda_{-}(\alpha D)<S_{in}$. For simplicity, we denote by $S_{2}^{\star}$ and
$\ul S$ the values of $S_{2}^{\star}(\alpha)$ and  $\ul S(\alpha)$,
with $S_{2}^{\star}$ such that $\mu(S_{2}^{\star})=\alpha D$.
For each $r \in(0,1)$, we define the function
\[
f_{r}(s)=D\phi_{\alpha,r}(s)-\mu(s) \ .
\]
A non-negative equilibrium for the first tank has then to satisfy $f_{r}(S_{1}^{\star})=0$.\\

One can easily check that $\phi_{\alpha,r}(\ul S)=1$ whatever the
value of $r
\in (0,1)$. The function $\phi_{\alpha,r}(\cdot)$ being decreasing, one has
$\phi_{\alpha,r}(s)>1$ for $s<\ul S$ and $\phi_{\alpha,r}(s)<1$ for $s>\ul S$.
For convenience, we shall also consider the function
\begin{equation}
\label{gamma}
\gamma(s)=\frac{\ul S -s}{\ul S - S_{in}+(S_{in}-s)\mu(s)/D}
\end{equation} 
that is defined on the set of $s \in (0,S_{in})$ such that
$(S_{in}-s)\mu(s)\neq S_{in}-\ul S$. On this set, 
one can easily check that the following equivalence is fulfilled
\[
f_{r}(s)=0 \Longleftrightarrow \gamma(s)=r \ .
\]
From (\ref{gamma}), one can also write
\[
\gamma(s)=\frac{(\phi_{\alpha,r}(s)-1)\frac{r}{1-r}}{(\phi_{\alpha,r}(s)-1)\frac{r}{1-r}-1+\mu(s)/D}
\]
and deduce the property
\begin{equation}
\label{gammaprime}
\gamma^{\prime}(s)=0 \Longleftrightarrow \phi_{\alpha,r}^{\prime}(s)(\mu(s)/D-1)=(\phi_{\alpha,r}(s)-1)\mu^{\prime}(s)/D \ .
\end{equation}
Recursively, one obtains for every integer $n$
\[
\left\{ \frac{d^{p}\gamma}{ds^{p}}(s)=0 \, , \, p=1\cdots n\right\}
\Longleftrightarrow 
\left\{ D\frac{d^{p}\phi_{\alpha,r}}{ds^{p}}(s)(\mu(s)-D)=
(D\phi_{\alpha,r}(s)-D)\frac{d^{p}\mu}{ds^{p}}(s)  \, , \, p=1\cdots
n\right\} \ .
\]
Consequently, the set ${\cal S}_{\alpha,r}$ defined in (\ref{calS}) 
can be characterized as
\[
{\cal S}_{\alpha,r} = 
\left\{ s \in (\lambda_{-},S_{in}) \mbox{ s.t. } \gamma(s)=r \mbox{
  and }
\min\left\{ n \in \Nset^{\star} \, \vert \, \frac{d^{n}\gamma}{ds^{n}}(s)\neq 
0 \right\} \mbox{ is even} \right\}
\]
or equivalently
\begin{equation}
\label{newcalS}
{\cal S}_{\alpha,r} = 
\left\{ s \in (0,S_{in}) \mbox{ s.t. } \gamma(s)=r \mbox{ is
  a local extremum  } \right\} \ .
\end{equation}

We distinguish several cases depending on the position of
$\ul S$ with respect to the set $\Lambda(D)$. In the following, we simply denote
$\Lambda$, $\lambda_{\pm}$ and $R_{\alpha}$ for $\Lambda(D)$,
$\lambda_{\pm}(D)$ and $R_{\alpha}$(D) respectively.\\

{\em Case I.} 
\medskip\\
When $\Lambda=\emptyset$ or $\lambda_{-}\geq S_{in}$, the function
$f_{r}$ is strictly positive on the interval $[0,\ul S]$.
On the interval $J=(\ul S,S_{in})$, the function $\gamma(\cdot)$ is
well defined with $\gamma(J)=(0,1)$, $\gamma(\ul S)=0$ and
$\gamma(S_{in})=1$. 
Consequently, there exists at least one solution of
$f_{r}(s)=0$, that necessarily belongs to the interval $J$.
If $R_{\alpha}=\emptyset$, $\gamma(\cdot)$ is increasing and there
exists a unique solution of $\gamma(S_{1}^{\star})=r$ whatever
is $r$. 
Notice that when the function $\mu(\cdot)$ is increasing on
$[0,S_{in}]$ (which is necessarily the case when $\lambda_{-}\geq
S_{in}$), one has necessarily $R_{\alpha}=\emptyset$, because the
function $f_{r}$ is decreasing.
Otherwise, property (\ref{newcalS}) implies that
$\gamma$ admits local extrema, and $\min R_{\alpha}$ and $\max R_{\alpha}$ are
respectively the smallest local minimum and largest local maximum of
the function $\gamma$ on the interval $J$. Consequently, the set
$R_{\alpha}$ cannot be reduced to a singleton.
Then, uniqueness of $S_{1}^{\star}$ is achieved exactly for $r$ that
does not belong to $[\min R_{\alpha},\max R_{\alpha}]$. For
any $r \in (\min R_{\alpha},\max R_{\alpha})$, there are at least three
solutions, that all belong to $J$,  by the Mean Value Theorem.
\medskip \\
When $\lambda_{-}<S_{in}\leq \lambda_{+}$, we distinguish two
sub-cases:
\begin{itemize}
\item[] $\ul S\leq \lambda_{-}$: the function $f_{r}$ is strictly positive
  on $[0,\ul S)$ and strictly negative on
  $(\lambda_{-},S_{in})$. Furthermore, $f_{r}$ is decreasing on
  $[\ul S,\lambda_{-}]$. So there exists a unique root $S_{1}^{\star}$
  of $f_{r}$ that necessarily belongs to $[\ul S,\lambda_{-}]$ (and
  the set $R_{\alpha}$ is empty).
\item[] $\ul S > \lambda_{-}$: the function $f_{r}$ is strictly positive
  on $[0,\lambda_{-}]$ and strictly negative on
  $[\ul S,S_{in})$.
On the interval $I=(\lambda_{-},\ul S)$, the function
$\gamma(\cdot)$ is well defined and $\gamma(I)=(0,1)$ with $\gamma(\lambda_{-})=1$ and $\gamma(\ul
S)=0$. If $R_{\alpha}$ is empty, then $\gamma(\cdot)$ is
decreasing on $I$, and for any
$r\in(0,1)$ there exits a unique $S_{1}^{\star}$ such that
$\gamma(S_{1}^{\star})=r$. 
If $R_{\alpha}$ is non-empty, property (\ref{newcalS}) implies that
$\gamma$ admits local extrema, and $\min R_{\alpha}$ and $\max R_{\alpha}$ are
respectively the smallest local minimum and largest local maximum of
the function $\gamma$ on the interval $I$.
Then, uniqueness of $S_{1}^{\star}$ on $J$ is achieved exactly for $r$ that
does not belong to $[\min R_{\alpha},\max R_{\alpha}]$. For
any $r \in (\min R_{\alpha},\max R_{\alpha})$, there are at least three
solutions, that all belong to the interval $I$, by the Mean Value
Theorem.
\end{itemize}

\bigskip

{\em Case II.}
\medskip\\
Notice that in this case ($\lambda_{+}<S_{in}$) the function $\mu$ is
non-monotonic. We consider three sub-cases depending on the relative
position of $\ul S$ with respect to $\lambda_{+}$.
\medskip\\
{\em Sub-case 1:} $\ul S < \lambda_{+}$. As for Case I, we
distinguish:
\begin{itemize}
\item[] $\ul S \leq \lambda_{-}$: one has
$f_{r}(\ul S)\geq 0$ and $f_{r}(S)<0$ for any $S \in
\Lambda$. $f_{r}(\cdot)$  being decreasing on $[0,\lambda_{-}]$, one
deduces that there exists exactly one solution $S_{1}^{\star}$ of 
$f_{r}(S)=0$ on the interval $[0,\lambda_{+}]$, whatever is $r$.
Furthermore, this solution has to belong to $[\ul S,\lambda_{-}]$.
The functions $\phi_{r}(\cdot)$ and $\mu(\cdot)$ being respectively
decreasing and increasing on this interval, one has necessarily
$\gamma^{\prime}(S_{1}^{\star})\neq 0$ and then $R^{-}_{\alpha}=\emptyset$.
\item[] $\ul S > \lambda_{-}$: one has $f_{r}(S)>0$
for any $S \in [0,\lambda_{-}]$, and  $f_{r}(S)<0$ for
any  $S \in [\ul S,\lambda_{+}]$.
On the interval $I=(\lambda_{-},\ul S)$, the function
$\gamma(\cdot)$ is well defined and $\gamma(I)=(0,1)$ with $\gamma(\lambda_{-})=1$ and $\gamma(\ul
S)=0$. If $R^{-}_{\alpha}$ is empty, then $\gamma(\cdot)$ is
decreasing on $I$, and for any
$r\in(0,1)$ there exits a unique $S_{1}^{\star} \in I$ such that
$\gamma(S_{1}^{\star})=r$. 
If $R^{-}_{\alpha}$ is non-empty, property (\ref{newcalS}) implies that
$\gamma$ admits local extrema.
Similarly to Case I, we obtain by the Mean Value Theorem that there
exists exactly one solution $S_{1}^{\star}$ of $\gamma(s)=r$ on the interval $[0,\lambda_{+}]$
for any $r\notin [\min R^{-}_{\alpha},\min R^{-}_{\alpha}]$, and there
are at least three solutions for $r \in (\min R^{-}_{\alpha},\min
R^{-}_{\alpha})$.
\end{itemize}
Differently to Case I, we have also to consider the interval $K=(\lambda_{+},S_{in})$ where the function
$\gamma(\cdot)$ is well defined and positive with
$\gamma(\lambda_{+})=1$ 
and $\lim_{s\to S_{in}}\gamma(s)=1$. We define
\[
r^{+}=\min\{ \gamma(s) \, \vert \, s \in K\}
\]
that belongs to $(0,1)$. Then $r^{+}$ belongs to $R^{+}_{\alpha}$, and
for any $r<r^{+}$ there is no solution of
$\gamma(s)=r$ on $K$. Thus $r^{+}$ is the minimal element of $R^{+}_{\alpha}$.
By the Mean Value Theorem there are at least
two solutions of $\gamma(s)=r$ on $K$ when $r>r^{+}$.
When $R^{+}_{\alpha}$ is not reduced to a singleton, the function
$\gamma$ has at least on local maximum $r_{M}$ and one local minimum
$r_{m}$, in addition to $r^{+}$. By the Mean Value Theorem, there are
at least four solutions of $\gamma(s)=r$ on $K$ for $r \in
(r_{m},r_{M})$.
\medskip\\
Finally, we have shown that the set $R^{+}_{\alpha}$ is non-empty, and
that the uniqueness of the solution
of $\gamma(S_{1}^{\star})=r$ occurs exactly for values of $r$ that do not
belong to the set $[\min R^{-}_{\alpha},\max R^{-}_{\alpha}] \cup [\min
R^{+}_{\alpha},1]$.
\bigskip\\
{\em Sub-case 2:} $\ul S = \lambda_{+}$.
One has $f_{r}(\ul S)=0$ for any $r$, so there exists a positive equilibrium with
$S_{1}^{\star}=\ul S$. $f_{r}(S)>0$ for any $S \in [0,\lambda_{-}]$ 
and the function $\gamma(\cdot)$ is well defined
on $I\cup J=(\lambda_{-},\ul S)\cup (\ul S,S_{in})$
with $\gamma(I\cup J)=(0,1)$, $\gamma(\lambda_{-})=1$ and $\lim_{s\to
  S_{in}}\gamma(s)=1$. Using the L'H\^{o}pital's rule, we show that the
function $\gamma(\cdot)$ can be continuously extended at $\ul S$:
\[
\lim_{s\to \ul S} \gamma(s)=
\lim_{s\to \ul S}
\frac{-1}{-\mu(s)/D+(S_{in}-s)\mu^{\prime}(s)/D}=
\frac{1}{1-(S_{in}-\ul S)\mu^{\prime}(\ul S)/D} \ .
\]
Note that $\mu^{\prime}(\ul S)<0$ so that $\gamma(\ul
S)$ belongs to $(0,1)$, and we pose 
\[
\bar r = \min\{ \gamma(s) \, \vert \, s \in (\lambda_{-},S_{in})\} \ .
\]
Then, for $r<\bar r$, there is no solution of $\gamma(s)=r$ on
$(\lambda_{-},S_{in})$, and $\ul S$ is the only solution of
$f_{r}(s)=0$ on $(0,S_{in})$. On the contrary, for $r>\bar r$, 
there are at least two solutions of $\gamma(s)=r$ on
$(\lambda_{-},S_{in})$
and the dynamics has at least two positive equilibria.\\

Similarly, the function $\gamma(\cdot)$ is $C^{1}$ on 
$(\lambda_{-},S_{in})$ because it is differentiable at
$\ul S$:
\[
\gamma^{\prime}(\ul S)=D\frac{(S_{in}-\ul
  S)\mu^{\prime\prime}(\ul S)-2\mu^{\prime}(\ul S)}{
[D-(S_{in}-\ul S)\mu^{\prime}(\ul S)]^{2}} 
\]
(and recursively as many time differentiable as the function
$\mu(\cdot)$ is, minus one).
Then $\bar r$ is the minimal element of the set $R^{+}_{\alpha}$, and
the set $R^{-}_{\alpha}$ is empty by definition.
As previously, when $R^{+}_{\alpha}$ is not reduced to a singleton, 
$\gamma(s)=r$ has at least four solutions for $r$ in a subset of
$(\min R^{+}_{\alpha},1)$.
\bigskip\\
Sub-case 3: $\ul S > \lambda_{+}$. We proceed similarly as in sub-case 1.
Note first that there is no solution of $f_{r}(s)=0$ on the intervals
$(0,\lambda_{-})$ and
$(\lambda_{+},\ul S)$ whatever is $r$.\\

On the set $\Lambda$, $\gamma(\cdot)$ is well defined with
$\gamma(\Lambda)\subset(0,1)$, 
$\gamma(\lambda_{-})=1$ and $\gamma(\lambda_{+})=1$ and we define
\[
r^{+}=\min\{ \gamma(s) \, \vert \, s \in \Lambda\}
\]
that belongs to $(0,1)$. One has necessarily $r^{+}=\min
R^{+}_{\alpha}$, and there is no solution of $\gamma(S_{1}^{\star})=r$
exactly when $r<r^{+}$. For $r>r^{+}$, there exist at least two
solutions by the Mean Value Theorem, and four for a subset of
$(r^{+},1)$ when $R^{+}_{\alpha}$ is not reduced to a singleton.\\

On the interval $J=(\ul S,S_{in})$, the function $\gamma(\cdot)$ is
well defined with $\gamma(J)=(0,1)$, $\gamma(\ul S)=0$ and $\gamma(S_{in})=1$. There exists at least one solution of
$f_{r}(s)=0$ on this interval.
If $R^{-}_{\alpha}=\emptyset$, $\gamma(\cdot)$ is increasing and there
exists a unique solution of $\gamma(S_{1}^{\star})=r$ on $J$ whatever
is $r$. Otherwise, $\min R^{-}_{\alpha}$ and $\max R^{-}_{\alpha}$ are
the smallest local minimum and largest local maximum of
the function $\gamma$ on the interval $J$, respectively.
Then, uniqueness of $S_{1}^{\star}$ on $J$ is achieved exactly for $r$ that
does not belong to $[\min R^{-}_{\alpha},\max R^{-}_{\alpha}]$, and for
$r \in (\min R^{-}_{\alpha},\max R^{-}_{\alpha})$, there are at least three
solutions by the Mean Value Theorem.\qed\\

\bigskip

For the proof of Theorem \ref{main_theo}, we recall below a result about asymptotically autonomous dynamics.\\

\begin{theorem}
\label{thThieme}
Let $\Phi$ be an asymptotically autonomous semi-flow with limit semi-flow $\Theta$, and let the orbit ${\mathcal O}_{\Phi}(\tau,\xi)$ have compact closure. Then the $\omega$-limit set $\omega_{\Phi}(\tau,\xi)$ is non-empty, compact, connected, invariant and chain-recurrent by the semi-flow $\Theta$ and attracts $\Phi(t,\tau,\xi)$ when $t \to \infty$.
\end{theorem}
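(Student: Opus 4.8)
The plan is to establish the six asserted properties of $\omega_\Phi(\tau,\xi)$ in increasing order of difficulty, leaning on the compact-closure hypothesis for the topological facts and on the asymptotic autonomy for the dynamical ones. First I would write $\omega_\Phi(\tau,\xi)=\bigcap_{T\geq\tau}\overline{\{\Phi(t,\tau,\xi):t\geq T\}}$ as a nested intersection of closed subsets of the compact closure of the orbit. Non-emptiness then follows by sequential compactness (extract $t_n\to\infty$ with $\Phi(t_n,\tau,\xi)$ convergent), and compactness is immediate as a closed subset of a compact set. Connectedness I would obtain by the classical argument: were $\omega_\Phi$ split into two disjoint nonempty compact pieces at positive distance, the continuity of $t\mapsto\Phi(t,\tau,\xi)$ together with precompactness would force the trajectory to accumulate at an intermediate point lying in neither piece, a contradiction. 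The attraction statement, $\mathrm{dist}(\Phi(t,\tau,\xi),\omega_\Phi)\to 0$, is then the standard consequence of precompactness combined with the definition of the $\omega$-limit set.

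The dynamical heart of the theorem is that $\omega_\Phi$ is invariant and chain-recurrent for the \emph{limit} semi-flow $\Theta$, not for $\Phi$ itself. For forward invariance I would take $y\in\omega_\Phi$, say $\Phi(t_n,\tau,\xi)\to y$ with $t_n\to\infty$, fix $s\geq 0$, and use the cocycle identity $\Phi(t_n+s,\tau,\xi)=\Phi(t_n+s,t_n,\Phi(t_n,\tau,\xi))$. The defining property of asymptotic autonomy — that $\Phi(t_n+s,t_n,\eta_n)\to\Theta(s,y)$ whenever $t_n\to\infty$ and $\eta_n\to y$, locally uniformly in $s$ — then gives $\Theta(s,y)=\lim_n\Phi(t_n+s,\tau,\xi)\in\omega_\Phi$. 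Full (two-sided) invariance requires producing backward orbits inside $\omega_\Phi$: for each $s$ the points $\Phi(t_n-s,\tau,\xi)$ (for $t_n$ large) lie in the compact orbit closure, so along a subsequence they converge to some $y_{-s}\in\omega_\Phi$ with $\Theta(s,y_{-s})=y$, and a Cantor diagonal extraction over $s$ assembles these into an entire $\Theta$-orbit through $y$ contained in $\omega_\Phi$.

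The step I expect to be the main obstacle is \textbf{chain-recurrence} under $\Theta$. The mechanism is that, for large times, the genuine trajectory of $\Phi$ is an arbitrarily accurate \emph{pseudo-orbit} of the autonomous semi-flow $\Theta$: given $\varepsilon>0$ there is $T$ such that for $t\geq T$ one has $d(\Phi(t+s,t,\Phi(t,\tau,\xi)),\Theta(s,\Phi(t,\tau,\xi)))<\varepsilon$ for $s$ in any fixed bounded interval, by asymptotic autonomy together with equicontinuity of $\Theta$ on the compact orbit closure. To show that any $y\in\omega_\Phi$ lies on an $\varepsilon$-chain returning to itself, I would exploit that $y$ is approached along the trajectory infinitely often: pick two far-apart times $t_m<t_n$ at which $\Phi$ is within $\varepsilon$ of $y$, and splice together the finitely many $\Theta$-steps that shadow the trajectory segment from $t_m$ to $t_n$, absorbing the small $\Phi$-versus-$\Theta$ discrepancies into the admissible $\varepsilon$-jumps and closing the chain at $y$ using the endpoint proximity. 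Making the bookkeeping uniform — controlling the shadowing error simultaneously over all the (boundedly many) steps of the chain, and handling the nonautonomous time-shift correctly in the cocycle — is the delicate part, and this is precisely where Thieme's hypothesis of \emph{local uniform} convergence of $\Phi$ to $\Theta$ is indispensable.
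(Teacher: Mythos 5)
The paper does not actually prove this statement: in the appendix its ``proof'' is a one-line citation to Theorem 1.8 of Mischaikow--Smith--Thieme \cite{MST95}. Your sketch is therefore not an alternative to the paper's argument but a blind reconstruction of the cited one, and it follows essentially the same strategy as the original reference: the topological properties (non-emptiness, compactness, connectedness, attraction) come from precompactness of the orbit; invariance comes from the sequential form of asymptotic autonomy ($\Phi(t_j+s_j,t_j,x_j)\to\Theta(s,x)$ whenever $t_j\to\infty$, $s_j\to s$, $x_j\to x$), applied forwards and, via a diagonal extraction over $s$, backwards to build an entire $\Theta$-orbit through each limit point; and chain recurrence comes from viewing the tail of the $\Phi$-trajectory as an asymptotic pseudo-orbit of $\Theta$ --- which is precisely the mechanism in \cite{MST95}, where the theorem is deduced from a lemma on $\omega$-limit sets of precompact asymptotic pseudo-orbits of an autonomous semiflow. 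So your outline is correct in structure and correctly identifies the shadowing argument as the heart of the matter.

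Three points would need tightening to make the sketch complete. First, chain recurrence in the sense of \cite{MST95} requires $(\varepsilon,t)$-chains whose step times are all bounded below by an arbitrarily prescribed $t>0$; your splicing of the segment $[t_m,t_n]$ must therefore use steps of length in, say, $[t,2t]$, not merely ``a bounded interval'' --- otherwise the chains could degenerate into trivially short jumps. Second, the points of the chain must lie in $\omega_{\Phi}(\tau,\xi)$ itself, whereas your spliced points are trajectory values $\Phi(t_i,\tau,\xi)$; you need to project each onto $\omega_{\Phi}$ using the attraction property you have already established, and absorb the projection error through uniform continuity of $(s,x)\mapsto\Theta(s,x)$ on the compact set $[t,2t]\times\overline{{\mathcal O}_{\Phi}(\tau,\xi)}$. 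Third, the ``locally uniform'' convergence of $\Phi$ to $\Theta$ that you call indispensable is not an additional hypothesis of Thieme's: it follows from the sequential definition above by a standard compactness argument, and a complete proof should derive it rather than assume it. None of these is a conceptual obstacle; they are exactly the bookkeeping you flagged as delicate, and they are carried out in \cite{MST95}.
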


\begin{proof}
See \cite[Theorem 1.8]{MST95}.
\end{proof}

\bigskip

We shall also need to treat a limiting case of the single chemostat
that is not covered by Proposition \ref{prop-chemostat}, when
one has exactly $\mu(S_{in})=\alpha D$ for the buffer tank with
$\mu(\cdot)$ non-monotonic, that is
provided by the following Lemma.\\

\begin{lemma}
\label{lemma}
For any $\alpha>0$ such that $ \alpha D\leq \mu(S_{in})$ and non-negative initial
condition with $X_{2}(0)>0$, the solution $S_{2}(t)$ and $X_{2}(t)$ of
(\ref{chemostat2b}) is non negative for any $t>0$ and one has
\[
\lim_{t\to+\infty}
(S_{2}(t),X_{2}(t))=(\lambda_{-}(\alpha D),S_{in}-\lambda_{-}(\alpha D))
\ .
\]
\end{lemma}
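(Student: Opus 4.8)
The plan is to analyze the second subsystem $(S_2, X_2)$ in isolation, exploiting the cascade structure of (\ref{chemostat2b}): the last two equations for $(S_2, X_2)$ form a closed system identical to a single chemostat (\ref{chemostat0}) with dilution rate $\alpha D$ in place of $Q/V$. The only novelty relative to Proposition \ref{prop-chemostat} is the boundary case $\mu(S_{in}) = \alpha D$, i.e.\ $S_{in} = \lambda_+(\alpha D)$, where the upper positive equilibrium $E_+$ coincides with the washout $E_0 = (S_{in}, 0)$. In this degenerate situation the two equilibria $E_+$ and $E_0$ merge, so the standard bi-stability picture of Case 2 no longer applies and one must verify directly that the remaining positive equilibrium $E_- = (\lambda_-(\alpha D), S_{in} - \lambda_-(\alpha D))$ attracts every trajectory with $X_2(0) > 0$.

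First I would establish positive invariance and boundedness. Nonnegativity of $X_2$ is immediate from $\dot X_2 = (\mu(S_2) - \alpha D) X_2$, and nonnegativity of $S_2$ follows because $\dot S_2 = D\alpha(S_{in}-S_2) \ge 0$ whenever $S_2 = 0$. Next I would use the conservation structure already noted in the paper: summing the $S_2$ and $X_2$ equations gives $\frac{d}{dt}(S_2 + X_2) = \alpha D(S_{in} - S_2 - X_2)$, so the quantity $z = S_2 + X_2 - S_{in}$ satisfies $\dot z = -\alpha D z$ and hence $z(t) \to 0$ exponentially. This reduces the asymptotic analysis to the attracting invariant line $S_2 + X_2 = S_{in}$, on which the dynamics becomes the scalar equation
\begin{equation}
\dot S_2 = -\mu(S_2)(S_{in} - S_2) + \alpha D(S_{in} - S_2) = (S_{in} - S_2)(\alpha D - \mu(S_2)).
\end{equation}

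The heart of the argument is then the phase-line analysis of this reduced scalar equation on $(0, S_{in})$ together with an application of Theorem \ref{thThieme} to transfer the conclusion back to the full planar system. Under Assumptions A1 with $\mu$ non-monotonic and $\mu(S_{in}) = \alpha D$, one has $\lambda_+(\alpha D) = S_{in}$, so $\Lambda(\alpha D) = (\lambda_-(\alpha D), S_{in})$ and $\alpha D - \mu(S_2) > 0$ precisely for $S_2 \in (0, \lambda_-(\alpha D))$ and $S_2 \in (S_{in}, +\infty)$, while $\alpha D - \mu(S_2) < 0$ on $(\lambda_-(\alpha D), S_{in})$. Since the factor $(S_{in} - S_2)$ is positive on $(0, S_{in})$, the sign of $\dot S_2$ is negative on $(\lambda_-(\alpha D), S_{in})$ and positive on $(0, \lambda_-(\alpha D))$, so $S_2 = \lambda_-(\alpha D)$ is globally attracting on the open interval $(0, S_{in})$. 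I would then invoke Theorem \ref{thThieme}, viewing the full $(S_2, X_2)$ flow as asymptotically autonomous with the reduced scalar dynamics on $\{S_2 + X_2 = S_{in}\}$ as limit semiflow: the $\omega$-limit set is connected, invariant, and chain-recurrent for the limit flow, and since the only chain-recurrent points of the reduced scalar system in the relevant region are equilibria, with $\lambda_-(\alpha D)$ the unique attractor and $S_{in}$ repelling from the left, the $\omega$-limit set must be the single point $(\lambda_-(\alpha D), S_{in} - \lambda_-(\alpha D))$.

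The main obstacle I anticipate is the degenerate equilibrium at $S_2 = S_{in}$ in the critical case $\mu(S_{in}) = \alpha D$: here $\dot S_2$ has a double zero structure (the factor $\alpha D - \mu(S_2)$ vanishes to first order at $S_{in}$ in addition to the factor $S_{in} - S_2$), so the washout is non-hyperbolic and linearization is inconclusive. The care needed is to confirm that no trajectory starting with $X_2(0) > 0$ can converge to this semi-stable point: a trajectory with $S_2(0) > \lambda_-(\alpha D)$ could a priori be attracted toward $S_{in}$ from the left, but the reduced sign analysis shows $\dot S_2 < 0$ throughout $(\lambda_-(\alpha D), S_{in})$, so $S_2$ decreases monotonically away from $S_{in}$ and the only possible limit in $(0, S_{in}]$ is $\lambda_-(\alpha D)$; equivalently, the chain-recurrence conclusion of Theorem \ref{thThieme} excludes convergence to the repelling boundary point. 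Handling this non-hyperbolic boundary behavior rigorously, rather than by linearization, is precisely why this limiting case requires a separate lemma outside the scope of Proposition \ref{prop-chemostat}.
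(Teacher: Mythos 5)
Your setup follows the paper's own route: positivity, the conserved quantity $Z_{2}=S_{2}+X_{2}-S_{in}$ decaying exponentially, reduction of $S_{2}$ to an asymptotically autonomous scalar equation, and Theorem \ref{thThieme}; you also correctly identify that the whole point of the lemma is the degenerate case $\alpha D=\mu(S_{in})$, where the washout is non-hyperbolic. The genuine gap is in how you exclude convergence to the washout in that case. Theorem \ref{thThieme} cannot do this job: every equilibrium of the limit flow is, trivially, a compact, connected, invariant, chain-recurrent set, whether it is attracting, repelling or semi-stable, so the theorem only yields that the $\omega$-limit set is either $\{(\lambda_{-}(\alpha D),S_{in}-\lambda_{-}(\alpha D))\}$ or $\{(S_{in},0)\}$; chain recurrence does not ``exclude convergence to the repelling boundary point.'' Likewise, your monotonicity argument ($\dot S_{2}<0$ throughout $(\lambda_{-}(\alpha D),S_{in})$, hence $S_{2}$ decreases away from $S_{in}$) is a statement about the limit dynamics on the line $S_{2}+X_{2}=S_{in}$, not about the actual trajectory: the true equation is $\dot S_{2}=(\alpha D-\mu(S_{2}))(S_{in}-S_{2})-\mu(S_{2})Z_{2}(t)$, and when $Z_{2}(t)<0$ the perturbation term is positive and can push $S_{2}$ back up toward $S_{in}$. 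This matters because the washout is only \emph{algebraically} repelling from the left: near $S_{in}$ the limit vector field behaves like $\mu^{\prime}(S_{in})(S_{in}-S_{2})^{2}$, so an exponentially decaying perturbation could a priori shepherd the trajectory into this degenerate point. That scenario is exactly what must be ruled out by hand, and neither of your two arguments does it.

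The paper closes this gap with a quantitative estimate that your proof would need to supply (or replace by an equivalent one). Write $X_{2}(t)=X_{2}(0)\exp\bigl(\int_{0}^{t}(\mu(S_{2}(\tau))-\alpha D)\,d\tau\bigr)$, so that $X_{2}\to 0$ forces $\int_{T}^{+\infty}(\mu(S_{2}(\tau))-\alpha D)\,d\tau=-\infty$. Then expand by Taylor--Lagrange, $\mu(S_{2})-\alpha D=\mu(S_{2})-\mu(S_{in})=\mu^{\prime}(\tilde S_{2})(S_{2}-S_{in})=\mu^{\prime}(\tilde S_{2})(Z_{2}-X_{2})$, note that when $X_{2}\to 0$ one has $S_{2}\to S_{in}$ so that $\tilde S_{2}(\tau)$ eventually lies on the decreasing branch of $\mu$, giving $-\mu^{\prime}(\tilde S_{2}(\tau))\in[a,b]$ with $a,b>0$, and use $\vert Z_{2}(\tau)\vert=\vert Z_{2}(T)\vert e^{-\alpha D(\tau-T)}$ to obtain $\int_{T}^{+\infty}(\mu(S_{2}(\tau))-\alpha D)\,d\tau\geq a\int_{T}^{+\infty}X_{2}(\tau)\,d\tau-\frac{b}{\alpha D}\vert Z_{2}(T)\vert>-\infty$, a contradiction. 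Without this step the critical case --- which, as you yourself observe, is the entire reason the lemma exists outside Proposition \ref{prop-chemostat} --- remains unproved.
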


\begin{proof}
From equations (\ref{chemostat2b}) one can write the properties
\[
\begin{array}{l}
S_{2}=0 \Longrightarrow \dot S_{2} >0 \ ,\\
X_{2}=0 \Longrightarrow \dot X_{2} =0 \ ,
\end{array}
\]
and deduces that the variables $S_{2}(t)$ and $X_{2}(t)$ remain non
negative for any positive time. Considering the variable $Z_{2}=S_{2}+X_{2}-S_{in}$ whose
dynamics is $\dot Z_{2}=-\alpha D Z_{2}$, we conclude that $S_{2}(t)$ are
$X_{2}(t)$ are bounded and satisfy
\[
\lim_{t\to+\infty} S_{2}(t)+X_{2}(t) = S_{in} \ .
\]
The dynamics of the variable $S_{2}$ can thus be written as an
non autonomous scalar equation:
\[
\dot S_{2} = (\alpha D-\mu(S_{2}))(S_{in}-S_{2})-\mu(S_{2})Z_{2}(t)
\]
that is asymptotically autonomous. The study of this asymptotic
dynamics is straightforward: any trajectory that converges forwardly to the
domain $[0,S_{in}]$ has to converge to $S_{in}$ or to a zero $S_{2}^{\star}$
of $S_{2}\mapsto \alpha D -\mu(S_{2})$ on the interval $(0,S_{in})$. 
Then, the application of Theorem
\ref{thThieme} allows to conclude
that forward trajectories of the $(S_{2},X_{2})$ sub-system converge
asymptotically either to the positive steady state
$(S_{2}^{\star},S_{in}-S_{2}^{\star})$ or to the
``washout'' equilibrium $(S_{in},0)$.

For $\alpha$ such that $\alpha D < \mu(S_{in})$, there is only one
such zero, that is equal to $\lambda_{-}(\alpha D)$ (and
necessarily lower than $S_{in}$). We are in conditions of Case 3 of
Proposition \ref{prop-chemostat}: $S_{in} \in \Lambda(\alpha D)$, and the convergence to the positive equilibrium is proved.

For the limiting case $\alpha D =
\mu(S_{in})$, either $\lambda_{-}(\alpha D)=S_{in}$ when $\mu(\cdot)$
is monotonic on the interval $[0,S_{in}]$ (then the washout is the
only equilibrium), or $\lambda_{-}(\alpha
D)<S_{in}$ when $\mu(\cdot)$ is non-monotonic. In this last situation,
none of the cases of Proposition \ref{prop-chemostat} are fulfilled.
We show that for any initial
condition such that $X_{2}(0)>0$, the forward trajectory cannot converge
to the washout equilibrium. From equations (\ref{chemostat2b}) one can write
\[
X_{2}(t)=X_{2}(0)\, e^{\displaystyle
  \int_{0}^{t}(\mu(S_{2}(\tau))-\alpha D) d\tau} \ .
\]
If $X_{2}(.)$ tends to $0$, then one should have
\begin{equation}
\label{int-infty}
\int_{T}^{+\infty}(\mu(S_{2}(\tau))-\alpha D) d\tau = -\infty
\end{equation}
for any finite positive $T$. Using Taylor-Lagrange Theorem, there
exists a continuous function $\theta(.)$ in $(0,1)$ such that
\[
\mu(S_{2}(\tau))=\mu(S_{in})+\mu^{\prime}(\tilde
S_{2}(\tau))(S_{2}(\tau)-S_{in}) \mbox{ with }
\tilde S_{2}(\tau)=S_{in}+\theta(\tau)(S_{in}-S_{2}(\tau)) \ .
\]
One can then write
\[
\begin{array}{lll}
\ds \int_{T}^{+\infty}(\mu(S_{2}(\tau))-\alpha D) d\tau & = & \ds
\int_{T}^{+\infty}(\mu(S_{in})-\alpha D)d\tau
-\int_{T}^{+\infty}\mu^{\prime}(\tilde S_{2}(\tau))X_{2}(\tau)d\tau
+\int_{T}^{+\infty}\mu^{\prime}(\tilde S_{2}(\tau))Z_{2}(\tau)d\tau\\
& = & \ds -\int_{T}^{+\infty}\mu^{\prime}(\tilde
S_{2}(\tau))X_{2}(\tau)d\tau -\frac{1}{\alpha D}\int_{T}^{+\infty}\mu^{\prime}(\tilde
S_{2}(\tau))\dot Z_{2}(\tau) d\tau \ .
\end{array}
\]
Note that $S_{2}(.)$ tends to $S_{in}$ when $X_{2}(\cdot)$ tends to
$0$.
So there exists $T>0$ such that
$\tilde S_{2}(\tau)>\hat S$ for any $\tau>T$, and accordingly to Assumptions
A1, there exist positive numbers $a$, $b$ such that
$-\mu^{\prime}(\tilde S_{2}(\tau)) \in [a,b]$
for any $\tau >T$. The following inequality is obtained
\[
\int_{T}^{+\infty}(\mu(S_{2}(\tau))-\alpha D) d\tau \geq
a\int_{T}^{+\infty}X_{2}(\tau)d\tau-\frac{b}{\alpha D}|Z_{2}(T)|
\]
leading to a contradiction with (\ref{int-infty}).
\end{proof}

\bigskip

\noindent {\bf Proof of Theorem \ref{main_theo}.}

Let us consider the vector
\[
Z=\left[\begin{array}{c}
X_{1}+S_{1}-S_{in}\\
X_{2}+S_{2}-S_{in}
\end{array}\right]
\]
whose dynamics is linear:
\[
\dot Z = D\underbrace{\left(\begin{array}{cc}
\ds -\frac{1}{r} & \ds \frac{\alpha(1-r)}{r}\\[3mm]
0 & -\alpha
\end{array}\right)}_{\displaystyle A}Z \ .
\]
The matrix $A$ is clearly Hurwitz and consequently $Z$ converges
exponentially towards $0$ in forward time. Furthermore, 
variables $S_{2}$ and $X_{2}$ being non negative,
one has also from (\ref{chemostat2b}) the following properties
\[
\begin{array}{l}
S_{1}=0 \Longrightarrow \dot S_{1} \geq 0 \ ,\\
X_{1}=0 \Longrightarrow \dot X_{1} \geq 0 \ ,
\end{array}
\]
and deduces that variables $S_{1}$ and $X_{1}$ stay also non negative
in forward time.
The definition of $Z$ allows
us to conclude that variables $S_{1}$, $X_{1}$, $S_{2}$, $X_{2}$ are bounded.\\

From equations (\ref{chemostat2b}), the dynamics of the variable
$S_{1}$ can be written as an non-autonomous scalar equation:
\begin{equation}
\label{eq}
\dot S_{1} = 
\left(-\mu(S_{1})+D\frac{1-\alpha(1-r)}{r}\right)(S_{in}-S_{1})+D\frac{\alpha(1-r)}{r}(S_{2}(t)-S_{1})-\mu(S_{1})Z_{1}(t)
\ .
\end{equation}

When the initial condition of sub-system $(S_{2},X_{2})$ belongs to the
attraction basin of the washout, the dynamics (\ref{eq}) is
asymptotically autonomous with the limiting equation
\begin{equation}
\dot S_{1} = (-\mu(S_{1})+D/r)(S_{in}-S_{1}) \ .
\end{equation}
From Theorem \ref{thThieme}, we deduce that $S_{1}$ converges to
$S_{1}^{\star}$, one of the zeros of the function
\[
f(s)=(-\mu(s)+D/r)(S_{in}-s)
\]
on the interval $[0,S_{in}]$, that are $S_{in}$,
$\lambda_{-}(D/r)$ (if $\lambda_{-}(D/r)<S_{in}$) and
$\lambda_{+}(D/r)$ (if $\lambda_{+}(D/r)<S_{in}$).
The Jacobian matrix of the whole dynamics
(\ref{chemostat2b}) at steady state
$(S_{1}^{\star},S_{in}-S_{1}^{\star},S_{in},0)$
in $(Z,S_{1},S_{2})$ coordinates is
\[
\left(\begin{array}{c|c}
\\
\mbox{\Large A} & \mbox{\Large 0}\\
\\
\hline
\\
\begin{array}{cc}
-\mu(S_{1}^{\star}) & 0\\[3mm]
0 & -\mu(S_{in})
\end{array} &
\begin{array}{cc}
f^{\prime}(S_{1}^{\star}) &
\ds D\frac{\alpha(1-r)}{r}\\[3mm]
0 & \mu(S_{in})-\alpha D
\end{array}
\end{array}\right) \ .
\]
When the attraction basin of the washout of the $(S_{2},X_{2})$
subsystem is not reduced to a singleton, one has necessarily
$\mu(S_{in})<\alpha D$ (see Lemma \ref{lemma}). Furthermore, one has
$f^{\prime}(S_{in})=\mu(S_{in})-D/r$ and
$f^{\prime}(S_{1}^{\star})=-\mu^{\prime}(S_{1}^{\star})(S_{in}-S_{1}^{\star})$
when $S_{1}^{\star}< S_{in}$. So, apart two possible particular values of
$r$ that are such that $r=D/\mu(S_{in})$ or
$\lambda_{-}(D/r)=\lambda_{+}(D/r)<S_{in}$,
$f^{\prime}(S_{1}^{\star})$ is non-zero and the equilibrium is thus
hyperbolic.  Finally, we conclude about the possible asymptotic
behaviors of the whole dynamics as follows.
\begin{itemize}
\item[-] the washout equilibrium is attracting when
  $\mu(S_{in})<D/r$. When $\mu(S_{in})>D/r$, this equilibrium is a
  saddle (with a stable manifold of dimension one). Accordingly to the
Theorem of the Stable Manifold, the trajectory solution cannot
converges to such an equilibrium, excepted from a measure-zero subset of initial
conditions.
\item[-] when $\lambda_{-}(D/r)<S_{in}$, the equilibrium with
$S_{1}^{\star}=\lambda_{-}(D/r)$ is always attracting.
\item[-] when $\lambda_{+}(D/r)<S_{in}$, the equilibrium with
$S_{1}^{\star}=\lambda_{+}(D/r)$ is a saddle (with a stable manifold of dimension one). Accordingly to the
Theorem of the Stable Manifold, the trajectory solution cannot
converges to such an equilibrium, excepted from a measure-zero subset of initial
conditions.
\end{itemize}
This finishes to prove the point i. of the Theorem.\\

\bigskip

When the initial condition of sub-system $(S_{2},X_{2})$ does not belong to the
attraction basin of the washout, Proposition \ref{prop-chemostat}
ensures that $S_{2}(t)$ converges towards a positive
$S_{2}^{\star}$ that is equal to $\lambda_{-}(\alpha D)$ or $\lambda_{+}(\alpha D)$.
Then, equation (\ref{eq}) can be equivalently written as:
\begin{equation}
\label{dynaS1}
\dot S_{1} = 
(D\phi_{\alpha,r}(S_{1})-\mu(S_{1}))(S_{in}-S_{1})+D\frac{\alpha(1-r)}{r}(S_{2}(t)-S_{2}^{\star})-\mu(S_{1})Z_{1}(t) \ .
\end{equation}
So the dynamics (\ref{dynaS1}) is
asymptotically autonomous with the limiting equation
\begin{equation}
\label{reducedS1}
\dot S_{1} = (D\phi_{\alpha,r}(S_{1})-\mu(S_{1}))(S_{in}-S_{1})  \ .
\end{equation}
From Theorem \ref{thThieme}, we conclude
that forward trajectories of the $(S_{1},X_{1})$ sub-system converge
asymptotically either to a stationary point
$(S_{1}^{\star},S_{in}-S_{1}^{\star})$
where $S_{1}^{\star}$ is a zero of the function 
\[
f_{r}(s)=D\phi_{\alpha,r}(s)-\mu(s)
\]
on the interval $(0,S_{in})$, either to the washout point $(S_{in},0)$. 
We show that this last case is not possible.
From equations (\ref{chemostat2b}), one has
\[
X_{1}=0 \Longrightarrow \dot X_{1} =D\frac{\alpha(1-r)}{r}X_{2}
\]
and as $X_{2}(t)$ converges to a
positive value, we deduce that $X_{1}(t)$ cannot converges towards
$0$.\\

The functions $f_{r}$ being analytic for any $r$, the roots $S_{1}^{\star}$ are
isolated. As for the proof of Proposition \ref{prop1} we consider the function
\[
\gamma(s)=\frac{\ul S -s}{\ul S - S_{in}+(S_{in}-s)\mu(s)/D}
\]
that is analytic on its domain of definition and such that
\[
f_{r}(s)=0 \Longleftrightarrow \gamma(s)=r \ .
\]
This shows that, excepted for some isolated values of $r$ in $(0,1)$, the
zero of $f_{r}$ are such that $f_{r}^{\prime}(S_{1}^{\star})\neq 0$.\\

Let us now write the Jacobian matrix $J^{\star}$ of dynamics
(\ref{chemostat2b}) at steady state
$E^{\star}=(S_{1}^{\star},S_{in}-S_{1}^{\star},S_{2}^{\star},S_{in}-S_{2}^{\star})$
in $(Z,S_{1},S_{2})$ coordinates:
\[
J^{^{\star}}=\left(\begin{array}{c|c}
\\
\mbox{\Large A} & \mbox{\Large 0}\\
\\
\hline
\\
\begin{array}{cc}
-\mu(S_{1}^{\star}) & 0\\[3mm]
0 & -\mu(S_{2}^{\star})
\end{array} &
\begin{array}{cc}
f_{r}^{\prime}(S_{1}^{\star})(S_{in}-S_{1}^{\star}) &
\ds D\frac{\alpha(1-r)}{r}\\[3mm]
0 & -\mu^{\prime}(S_{2}^{\star})(S_{in}-S_{2}^{\star})
\end{array}
\end{array}\right) \ .
\]
Considering the following facts:\\
\indent i. $A$ is Hurwitz,\\
\indent ii. $\Lambda(\alpha D)\neq\emptyset$ implies that
$S_{2}^{\star}$ is not equal to $\hat S$. So one has
$\mu^{\prime}(S_{2}^{\star})\neq 0$ (cf Assumptions A1),\\
\indent iii. $f_{r}^{\prime}(S_{1}^{\star})\neq 0$ for almost any $r$,\\
we conclude that any equilibrium $E^{\star}$ is hyperbolic
(for almost any $r$) and is\\
\indent- a saddle point when $\mu^{\prime}(S_{2}^{\star})>0$ or
$f_{r}^{\prime}(S_{1}^{\star})>0$,\\
\indent- an exponentially stable critical point otherwise.\\

Furthermore, the left endpoints of the connected components of the set
$\Gamma_{\alpha,r}(D)$ are exactly the roots of $f_{r}$ with
$f_{r}(S_{1}^{\star})<0$.
Finally, from the Stable Manifold Theorem we conclude that, excepted from the stable
manifolds of the saddle equilibria, the trajectory converges to an
equilibrium that is such that $S_{2}^{\star}=\lambda_{-}(\alpha D)$
and $f_{r}(S_{1}^{\star})<0$. This ends the proof of point ii.
\qed\\

\bigskip

\begin{proposition}
\label{prop5}
Assume that the hypotheses A1 are fulfilled with $\Lambda(D)\neq\emptyset$
and $\lambda_{+}(D)<S_{in}$.
There exist buffered configurations
with an additional tank of volume $V_{2}$ that possesses a unique globally
exponentially stable positive equilibrium from any initial condition with $S_{2}(0)>0$, exactly when $V_{2}$
fulfills the condition
\begin{equation}
\label{condV2}
\frac{V_{2}}{V} >  \left(\frac{V_{2}}{V}\right)_{\inf}=\frac{\ds \max_{s \in
    (\lambda_{+}(D),S_{in})}\varphi(s)}{\ds \max_{s \in [0,\bar s]}\psi(s)} \ ,
\end{equation}
where the functions $\varphi(\cdot)$ and $\psi(\cdot)$ are defined as
follows:
\begin{equation}
\label{defvarphi_and_psi}
\varphi(s)=(S_{in}-s)(D-\mu(s)) \ , \qquad \psi(s)=\mu(s)(S_{in}-s) \ ,
\end{equation} 
and $\bar s$ is the number
\begin{equation}
\label{bar_s}
\bar s = \lim_{\alpha\to \mu(S_{in})}S_{2}^{\star}(\alpha) \ .
\end{equation}
The dilution rate $D_{2} \in (0,\mu(S_{in}))$ has then to satisfy the condition
\[
\max_{s \in (\lambda_{+}(D),S_{in})}\varphi(s) < 
D_{2}\frac{V_{2}}{V}(S_{in}-S_{2}^{\star}(D_{2})) < S_{in} \ .
\]
Furthermore, one has
\begin{equation}
\label{ineqDeltaV}
\left(\frac{V_{2}}{V}\right)_{\inf} < \left(\frac{\Delta
    V}{V}\right)_{\inf} \ .
\end{equation}
\end{proposition}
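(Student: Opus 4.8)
The plan is to reduce the four–dimensional system to the scalar limiting equation for $S_1$, exactly as in the proof of Theorem \ref{main_theo}, and then convert the global–stability question into a statement about the level sets of $\varphi$. Since we impose $D_2\in(0,\mu(S_{in}))$, the buffer is a single chemostat in Case 3 of Proposition \ref{prop-chemostat}, so for $X_2(0)>0$ the pair $(S_2,X_2)$ converges to $(\lambda_-(D_2),S_{in}-\lambda_-(D_2))$ with $S_2^\star(D_2)=\lambda_-(D_2)$; the boundary case $D_2=\mu(S_{in})$ is the one handled by Lemma \ref{lemma}. By the asymptotically autonomous argument (Theorem \ref{thThieme}) already used in Theorem \ref{main_theo}, the asymptotic behaviour of the main tank (here $V_1=V$, so $Q/V_1=D$) is dictated by the equilibria of $\dot S_1=-\mu(S_1)(S_{in}-S_1)+(Q_1S_{in}+Q_2S_2^\star-QS_1)/V$.

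First I would derive the stationarity relation. Setting the right–hand side to zero, substituting $X_1^\star=S_{in}-S_1^\star$, and regrouping via $Q_1(S_{in}-S_1^\star)+Q_2(S_2^\star-S_1^\star)=Q(S_{in}-S_1^\star)-Q_2(S_{in}-S_2^\star)$ with $D=Q/V$, the condition collapses to
\begin{equation}
\varphi(S_1^\star)=\frac{Q_2}{V}(S_{in}-S_2^\star(D_2))=\frac{V_2}{V}\,\psi(S_2^\star(D_2)),
\end{equation}
where I used $\mu(S_2^\star(D_2))=D_2$ to rewrite $D_2(S_{in}-S_2^\star)=\psi(S_2^\star)$. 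Writing $c$ for this constant, the reduced field is exactly $\dot S_1=\varphi(S_1)-c$. The next step reads off the shape of $\varphi$ on $(0,S_{in})$ from Assumptions A1 and $\lambda_+(D)<S_{in}$: since $\varphi'(s)=-(D-\mu(s))-(S_{in}-s)\mu'(s)$, the function $\varphi$ decreases strictly from $\varphi(0)=DS_{in}$ to $0$ on $(0,\lambda_-(D))$, is negative on $(\lambda_-(D),\lambda_+(D))$, and forms a positive bump vanishing at both $\lambda_+(D)$ and $S_{in}$. Hence $\varphi(s)=c$ has a unique root precisely when $c>M:=\max_{s\in(\lambda_+(D),S_{in})}\varphi(s)$; that root lies in $(0,\lambda_-(D))$ with $\varphi'<0$, so it is exponentially stable, and $\dot S_1>0$ to its left while $\dot S_1<0$ on all of the remainder of $(0,S_{in})$, giving global attraction. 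When $0<c<M$ the bump produces two extra roots, i.e. three positive equilibria and multistability. The upper restriction on $c$ stated in the proposition encodes $Q_2<Q$ (equivalently $c<D(S_{in}-S_2^\star)<DS_{in}$), which keeps the unique root inside $(0,\lambda_-(D))$; the washout equilibria are excluded because $X_2(t)$ stays bounded away from $0$, exactly as in Theorem \ref{main_theo}.

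Then I would produce the threshold. Using $c=(V_2/V)\psi(S_2^\star(D_2))$, uniqueness $c>M$ reads $V_2/V>M/\psi(S_2^\star(D_2))$, so the optimal choice of the free parameter $D_2\in(0,\mu(S_{in}))$ maximises $\psi(S_2^\star(D_2))$. As $D_2$ ranges over $(0,\mu(S_{in}))$, $S_2^\star(D_2)=\lambda_-(D_2)$ sweeps $(0,\bar s)$ monotonically (as $\mu$ is increasing on $(0,\hat S)$ and $\bar s=\lambda_-(\mu(S_{in}))<\hat S$), whence $\sup_{D_2}\psi(S_2^\star(D_2))=\max_{s\in[0,\bar s]}\psi(s)$, attained at some $s^\dagger\in(0,\bar s]$ (the boundary value $s^\dagger=\bar s$ being covered by Lemma \ref{lemma}). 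This gives $(V_2/V)_{\inf}=M/\max_{[0,\bar s]}\psi$ as in (\ref{condV2}), and the ``exactly when'' is the two–sided check: for $V_2/V>(V_2/V)_{\inf}$ one picks $D_2$ near $s^\dagger$ so that $M<c<D(S_{in}-S_2^\star)$, whereas for $V_2/V\le(V_2/V)_{\inf}$ every admissible $D_2$ yields $c\le M$, hence three equilibria or a degenerate tangency and no unique globally stable equilibrium.

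Finally, for (\ref{ineqDeltaV}) I would compare the two explicit thresholds. On $(\lambda_+(D),S_{in})$ the function $\mu$ is decreasing with $\mu>\mu(S_{in})$, so $\varphi(s)<(S_{in}-\lambda_+(D))(D-\mu(S_{in}))$ strictly, giving $M<(S_{in}-\lambda_+(D))(D-\mu(S_{in}))$; on the other hand $\max_{[0,\bar s]}\psi\ge\psi(\bar s)=\mu(S_{in})(S_{in}-\bar s)$. In the bistable case $\mu(S_{in})<D$, so $\bar s=\lambda_-(\mu(S_{in}))<\lambda_-(D)<\lambda_+(D)$ and thus $S_{in}-\bar s>S_{in}-\lambda_+(D)$, yielding
\begin{equation}
\left(\frac{V_2}{V}\right)_{\inf}=\frac{M}{\max_{s\in[0,\bar s]}\psi(s)}<\frac{(S_{in}-\lambda_+(D))(D-\mu(S_{in}))}{\mu(S_{in})(S_{in}-\bar s)}<\frac{D-\mu(S_{in})}{\mu(S_{in})}=\left(\frac{\Delta V}{V}\right)_{\inf}.
\end{equation}
The main obstacle I anticipate is the optimisation and the ``exactly when'' bookkeeping of the third step, namely identifying $\sup_{D_2}\psi(S_2^\star(D_2))=\max_{[0,\bar s]}\psi$ and tracking whether this value is attained or merely approached (which is precisely why the threshold is an infimum realised by a strict inequality); the reductions in the first two steps are routine given Theorem \ref{main_theo} and Lemma \ref{lemma}, and the final comparison is an elementary two–bound estimate.
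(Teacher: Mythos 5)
Your proposal is correct and follows essentially the same route as the paper: the same stationarity relation $\varphi(S_1^\star)=D_2\frac{V_2}{V}(S_{in}-S_2^\star(D_2))=\frac{V_2}{V}\psi(S_2^\star(D_2))$, the same shape analysis of $\varphi$ (uniqueness of the root exactly for levels above $\max_{s\in(\lambda_+(D),S_{in})}\varphi(s)$), the same optimization over $D_2$ reducing to maximizing $\psi$ on $[0,\bar s]$, and the same two-bound comparison yielding (\ref{ineqDeltaV}). The only cosmetic differences are that you carry out the scalar sign analysis of $\dot S_1=\varphi(S_1)-c$ directly where the paper simply invokes Theorem \ref{main_theo} with $\alpha=D_2/D$ and $r=1/(1+\frac{V_2}{V})$, and that your final estimate bounds the maximum of $\varphi$ via $\lambda_+(D)$ instead of evaluating at the paper's maximizer $s^\star$.
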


\bigskip

\noindent {\bf Proof of Proposition \ref{prop5}.}
One can straightforwardly
check on equations (\ref{chemostat2}) that a positive equilibrium in the first
tank has to fulfill
\begin{equation}
\label{eq-varphi}
\varphi(S_{1}^{\star})=D_{2}\frac{V_{2}}{V}(S_{in}-S_{2}^{\star}(D_{2}))
\ .
\end{equation}
Let us examine some properties of the function $\varphi$ on the interval $(0,S_{in})$:
\begin{itemize}
\item[.]  $\varphi$ is negative exactly on the interval $\Lambda(D)$,
\item[.] $\varphi^{\prime}$ is negative on $(0,\lambda_{-}(D))$ with
  $\varphi(0)=S_{in}$ and $\varphi(\lambda_{-}(D))=0$,
\item[.] $\varphi(\lambda_{+}(D))=\varphi(S_{in})=0$ and $\varphi$ reaches
  its maximum $m^{+}$ on the sub-interval $(\lambda_{+}(D),S_{in})$,
  that is strictly less than $S_{in}=\varphi(0)$,
\end{itemize}
from which we deduce that there exists
a unique solution of $\varphi(s)=c$ on the whole interval
$(0,S_{in})$ exactly when $c\in (m^{+},S_{in})$ (see Figure
(\ref{figvarphi}) as an illustration).
\begin{figure}[ht]
\begin{center}
\includegraphics[width=6cm]{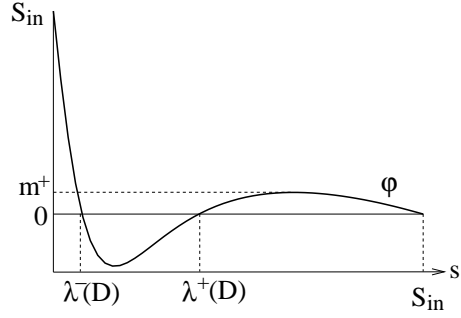}
\caption{Illustration of the graph of the function $\varphi$\label{figvarphi}}
\end{center}
\end{figure}
The configurations for which there exists a unique
$S_{1}^{\star} \in (0,S_{in})$ solution of the equation
(\ref{eq-varphi}) are exactly those that fulfill the condition
$D_{2}\frac{V_{2}}{V}(S_{in}-S_{2}^{\star}(D_{2}))\in (m^{+},S_{in})$, or equivalently
\[
\frac{m^{+}}{D_{2}(S_{in}-S^{\star}_{2}(D_{2}))} < \frac{V_{2}}{V} < 
\frac{S_{in}}{D_{2}(S_{in}-S^{\star}_{2}(D_{2}))}
\]
with $D_{2} \in (0,\mu(S_{in}))$.
Then, Theorem \ref{main_theo} with $\alpha=D_{2}/D$ and $r=1/(1+\frac{V_{2}}{V})$
guarantees that the unique positive equilibrium 
$(S^{\star}_{1},S_{in}-S^{\star}_{1},S^{\star}_{2}(D),S_{in}-S^{\star}_{2}(D))$
is globally exponentially stable on the domain
$\Rset_{+}^{2}\times\Rset_{+}^{\star}\times\Rset_{+}$.\\

Among all such configurations, the infimum of $V_{2}/V$ can be
approached arbitrarily close when $D_{2}$ is maximizing the function
\[
D_{2} \mapsto \alpha(S_{in}-S_{2}^{\star}(D_{2}))
\]
on $[0,\mu(S_{in})]$, that exactly amounts to maximize the function
$\psi(\cdot)$ on the interval $[0,\bar s]$.

Finally, let $s^{\star}$ be a minimizer of $\varphi$ on
$(\lambda_{+}(D),S_{in})$. One has
$\mu(s^{\star})>\mu(S_{in})=\mu(\bar s)$ and can write
\[
\left(\frac{V_{2}}{V}\right)_{\inf} \leq
\frac{\varphi(s^{\star})}{\psi(\bar s)}
<\frac{(S_{in}-s^{\star})(D-\mu(S_{in}))}{\mu(S_{in})(S_{in}-\bar
  s)}
=\frac{S_{in}-s^{\star}}{S_{in}-\bar
  s}\left(\frac{\Delta V}{V}\right)_{\inf}
\]
which leads to the inequality (\ref{ineqDeltaV}). \qed

\end{document}